\newtheorem{theorem}{Theorem}
\newtheorem{remark}[theorem]{Remark}
\newtheorem{lemma}[theorem]{Lemma}
\newtheorem{proposition}[theorem]{Proposition}
\newtheorem{definition}[theorem]{Definition}
\newtheorem{example}[theorem]{Example}
\newcommand*\diff{\mathrm{d}}
\DeclareMathOperator*{\divergenz}{div}              %
\DeclareMathOperator*{\Ss}{S}
\newcommand{\N}{\mathbb{N}}
\newcommand{\R}{\mathbb{R}}
\newcommand{\Lp}[1]{L^{#1}(\Omega)}
\newcommand{\Lprand}[1]{L^{#1}(\partial\Omega)}
\newcommand{\Wp}[1]{W^{1,#1}(\Omega)}
\newcommand{\Wpzero}[1]{W^{1,#1}_0(\Omega)}
\newcommand{\lan}{\langle}
\newcommand{\ran}{\rangle}
\newcommand{\eps}{\varepsilon}
\newcommand{\ph}{\varphi}
\newcommand{\into}{\int_{\Omega}}
\newcommand{\weak}{\rightharpoonup}
\newcommand{\close}{\overline{\Omega}}
\renewcommand{\l}{\left}
\renewcommand{\r}{\right}
\numberwithin{theorem}{section}
\numberwithin{equation}{section}
\title[The sub-supersolution method for variable exponent double phase systems]{The sub-supersolution method for variable exponent double phase systems with nonlinear boundary conditions}
\author[U.\,Guarnotta]{Umberto Guarnotta}
\address[U.\,Guarnotta]{Department of Mathematics and Computer Science, University of Palermo, 90123 Palermo, Italy}
\email{umberto.guarnotta@unipa.it}
\author[R.\,Livrea]{Roberto Livrea}
\address[R.\,Livrea]{Department of Mathematics and Computer Science, University of Palermo, 90123 Palermo, Italy}
\email{roberto.livrea@unipa.it}
\author[P.\,Winkert]{Patrick Winkert}
\address[P.\,Winkert]{Technische Universit\"{a}t Berlin, Institut f\"{u}r Mathematik, Stra\ss e des 17.\,Juni 136, 10623 Berlin, Germany}
\email{winkert@math.tu-berlin.de}
\dedicatory{Dedicated to Professor Siegfried Carl on the occasion of his 70th birthday}
\begin{document}

\begin{abstract}
	In this paper we study quasilinear elliptic systems driven by variable exponent double phase operators involving fully coupled right-hand sides and nonlinear boundary conditions. The aim of our work is to establish an enclosure and existence result for such systems by means of trapping regions formed by pairs of sup- and supersolutions. Under very general assumptions on the data we then apply our result to get infinitely many solutions. Moreover, we also discuss the case when we have homogeneous Dirichlet boundary conditions and present some existence results for this kind of problem.
\end{abstract}

\subjclass{}
\keywords{}

\maketitle

\section{Introduction}

In this paper we consider the following variable exponent double phase system with nonlinear boundary conditions
\begin{equation}\label{problem}
	\left\{
	\begin{aligned}
		-\divergenz \big(|\nabla u_1|^{p_1(x)-2}\nabla u_1 +\mu_1(x)|\nabla u_1|^{q_1(x)-2}\nabla u_1 \big)&=  f_1(x,u_1,u_2,\nabla u_1,\nabla u_2)\quad&& \text{in } \Omega,\\
		-\divergenz \big(|\nabla u_2|^{p_2(x)-2}\nabla u_2 +\mu_2(x)|\nabla u_2|^{q_2(x)-2}\nabla u_2 \big)&=  f_2(x,u_1,u_2,\nabla u_1,\nabla u_2)\quad&& \text{in } \Omega,\\
			\l(|\nabla u_1|^{p_1(x)-2}\nabla u_1 +\mu_1(x)|\nabla u_1|^{q_1(x)-2}\nabla u_1\r)\cdot \nu&= g_1(x,u_1,u_2) &&\text{on } \partial\Omega,\\
		\l(|\nabla u_2|^{p_2(x)-2}\nabla u_2 +\mu_2(x)|\nabla u_2|^{q_2(x)-2}\nabla u_2\r)\cdot \nu&= g_2(x,u_1,u_2) &&\text{on } \partial\Omega,
	\end{aligned}
	\right.
\end{equation}
where $\Omega \subseteq \R^N$, $N\geq 2$, is a bounded domain with Lipschitz boundary $\partial\Omega$, $\nu(x)$ denotes the unit normal of $\Omega$ at the point $x\in\partial\Omega$, $f_i\colon\Omega \times \R\times\R\times \R^N\times\R^N\to\R$ and $g_i\colon\partial\Omega\times \R\times \R \to\R$ are Carath\'eodory functions for $i=1,2$ that satisfy local growth conditions (see hypotheses \eqref{H2}) and we suppose the following assumptions on the exponents and the weight functions: \begin{enumerate}[label=\textnormal{(H$1$)},ref=\textnormal{H$1$}]
	\item\label{H1}
	$p_i,q_i\in C(\close)$ such that $1<p_i(x)<N$ and $p_i(x) < q_i(x)<p_i^*(x)$ for all $x\in\close$, as well as $0 \leq \mu_i(\cdot) \in \Lp{\infty}$, where $p_i^*$ is given by
	\begin{align*}
		p_i^*(x)= \frac{Np_i(x)}{N-p_i(x)}\quad\text{for }x\in \close,
	\end{align*}
	for $i=1,2$.
\end{enumerate}
The operator in \eqref{problem} is the so-called variable exponent double phase operator given by
\begin{align*}
	\divergenz \big(|\nabla u_i|^{p_i(x)-2}\nabla u_i +\mu_i(x)|\nabla u_i|^{q_i(x)-2}\nabla u_i \big), \quad u\in W^{1,\mathcal{H}_i}(\Omega),
\end{align*}
defined in a suitable Musielak-Orlicz Sobolev space $W^{1,\mathcal{H}_i}(\Omega)$, $i=1,2$, which has been recently studied in Crespo-Blanco-Gasi\'nski-Harjulehto-Winkert \cite{Crespo-Blanco-Gasinski-Harjulehto-Winkert-2022}. The study of such operators goes back to Zhikov \cite{Zhikov-1986}  who introduced for the first time energy functionals  defined by
\begin{align*}
	\omega \mapsto \int_\Omega \big(|\nabla  \omega|^p+a(x)|\nabla  \omega|^q\big)\,\diff x.
\end{align*}
Such functionals have been used to describe models for strongly anisotropic materials in the context of homogenization and elasticity. It also has several mathematical applications in the study of duality theory and of the Lavrentiev
gap phenomenon; see Zhikov \cite{Zhikov-1995,Zhikov-2011}.

The main objective of our paper is to establish a method of sub- and supersolution in terms of trapping region of the system \eqref{problem} under very general local structure conditions on the nonlinearities involved. As an application, we present some existence results to the system \eqref{problem}  under very mild and easily verifiable conditions on the data. In addition, we will also study the corresponding Dirichlet system and get a sub-supersolution approach including some existence results. The novelty of our paper is the combination of the variable exponent double phase operator with fully coupled convective right-hand sides along with coupled nonlinear boundary functions. To the best of our knowledge, such general systems have not been treated in the literature, even if we replace our operator with the $p_i$-Laplacian, that is, $\mu_i\equiv 0$ for $i=1,2$. 

Our paper is motivated by the work of Carl-Motreanu \cite{Carl-Motreanu-2017} who studied the elliptic system
\begin{align*}
	-\Delta_{p_i}u_i=f_i(x,u_1,u_2,\nabla u_1, \nabla u_2) \quad\text{in }\Omega, \quad u_i=0 \quad\text{on }\partial\Omega,
\end{align*}
where they obtain extremal positive and negative solutions of the system by combining the theory of pseudomonotone operators, regularity results as well as a strong maximum principle. On the contrary, in the present paper we obtain existence and multiplicity results by using neither regularity theory nor strong maximum principle, which are not available in our setting. The method of sub- and supersolution is a very powerful tool and has been used in several works: here we mention, for example, the papers of Carl-Le-Winkert \cite{Carl-Le-Winkert-2022}, Carl-Winkert \cite{Carl-Winkert-2009}, Motreanu-Sciammetta-Tornatore \cite{Motreanu-Sciammetta-Tornatore-2020}; see also the monographs of Carl-Le \cite{Carl-Le-2021} and Carl-Le-Motreanu \cite{Carl-Le-Motreanu-2007}.

We also point out that the right-hand sides in \eqref{problem} depend on the gradients of the solutions. Such reactions are said to be convection terms. The difficulty in the study of such terms is their nonvariational character, that is, the standard variational tools cannot be applied, even in the scalar case (i.e., for a single differential equation). For systems with convection terms only few works are available: we mention the papers of Guarnotta-Marano \cite{Guarnotta-Marano-2021-a, Guarnotta-Marano-2021-b}, Guarnotta-Marano-Moussaoui \cite{Guarnotta-Marano-Moussaoui-2022} and Faria-Miyagaki-Pereira \cite{Faria-Miyagaki-Pereira-2014}. Neumann systems without gradient dependence on the nonlinearity can be found in Chabrowski \cite{Chabrowski-2011} and de Godoi-Miyagaki-Rodrigues \cite{deGodoi-Miyagaki-Rodrigues-2016}. Finally, we mention some works pertaining equations exhibiting convection terms and subjected to Dirichlet or Neumann boundary conditions: we refer to Averna-Motreanu-Tornatore \cite{Averna-Motreanu-Tornatore-2016}, de Araujo-Faria \cite{de-Araujo-Faria-2019}, Dupaigne-Ghergu-R\u{a}dulescu \cite{Dupaigne-Ghergu-Radulescu-2007}, El Manouni-Marino-Winkert \cite{El-Manouni-Marino-Winkert-2022}, Faraci-Motreanu-Puglisi \cite{Faraci-Motreanu-Puglisi-2015}, Faraci-Puglisi \cite{Faraci-Puglisi-2016},  Figueiredo-Madeira \cite{Figueiredo-Madeira-2021}, Gasi\'nski-Papageorgiou \cite{Gasinski-Papageorgiou-2017}, Gasi\'nski-Winkert \cite{Gasinski-Winkert-2020}, Guarnotta-Marano-Motreanu \cite{Guarnotta-Marano-Motreanu-2020}, Liu-Motreanu-Zeng \cite{Liu-Motreanu-Zeng-2019}, Marano-Winkert \cite{Marano-Winkert-2019}, Motreanu-Tornatore \cite{Motreanu-Tornatore-2017}, Motreanu-Winkert \cite{Motreanu-Winkert-2019}, Papageorgiou-R\u{a}dulescu-Repov\v{s} \cite{Papageorgiou-Radulescu-Repovs-2020}, and Vetro-Winkert \cite{Vetro-Winkert-2022}.

The paper is organized as follows. In Section \ref{Section2} we present the main preliminaries, including the properties of the Musielak-Orlicz Sobolev space, the double phase operator and the definition of trapping region (see Definition \ref{def-sub-supersolution}). Section \ref{Section3} is devoted to our abstract existence result for given pairs of sub-supersolution (see Theorem \ref{theorem-sub-supersolution}), while in Section \ref{Section4} we present several existence results with a construction of sub-supersolution (see Theorems \ref{Neumannsol} and \ref{Neumannsols}). Finally, in Section \ref{Section5} we consider the corresponding Dirichlet systems including the method of sub-supersolution and some existence results (see Theorems \ref{theorem-sub-supersolution-dirichlet} and \ref{Dirichletsol}).

\section{Preliminaries}\label{Section2}

In this section we recall some facts about variable exponent Lebesgue spaces, Musielak-Orlicz Sobolev spaces, and properties of the variable exponent double phase operator. We refer to the books of Diening-Harjulehto-H\"{a}st\"{o}-R$\mathring{\text{u}}$\v{z}i\v{c}ka \cite{Diening-Harjulehto-Hasto-Ruzicka-2011} and Harjulehto-H\"{a}st\"{o} \cite{Harjulehto-Hasto-2019}; see also the papers of Crespo-Blanco-Gasi\'nski-Harjulehto-Winkert \cite{Crespo-Blanco-Gasinski-Harjulehto-Winkert-2022}, Fan-Zhao \cite{Fan-Zhao-2001}, and Kov{\'a}{\v{c}}ik-R{\'a}kosn{\'{\i}}k \cite{Kovacik-Rakosnik-1991}.

Let $\Omega$ be a bounded domain in $\mathbb{R}^N$ with Lipschitz boundary $\partial\Omega$ and let
\begin{align*}
	C_+(\close):=\big\{h \in C(\close) \, : \, 1<h(x) \text{ for all }x\in \close\big\}.
\end{align*}
For any $r\in C_+(\close)$ we define
\begin{align*}
	r^-=\min_{x\in \close}r(x) \quad\text{and}\quad r^+=\max_{x\in\close} r(x).
\end{align*}
Let $M(\Omega)$ be the space of all measurable functions $u\colon \Omega\to\R$. For a given $r \in C_+(\close)$, the variable exponent Lebesgue space $\Lp{r(\cdot)}$ is defined as
\begin{align*}
	\Lp{r(\cdot)}=\l\{u \in M(\Omega)\,:\, \into |u|^{r(x)}\,\diff x<\infty \r\}
\end{align*}
equipped with the Luxemburg norm given by
\begin{align*}
	\|u\|_{r(\cdot)} =\inf \l \{\lambda>0 \, : \, \into \l(\frac{|u|}{\lambda}\r)^{r(x)}\,\diff x \leq 1 \r\}.
\end{align*}
We know that $(\Lp{r(\cdot)},\|\cdot\|_{r(\cdot)})$ is a separable and reflexive Banach space. Similarly we introduce the variable exponent boundary Lebesgue space $(\Lprand{r(\cdot)},\|\cdot\|_{r(\cdot),\partial\Omega})$ by using the $(N-1)$-dimensional Hausdorff surface measure $\sigma$.  

Let $r' \in C_+(\close)$ be the conjugate variable exponent to $r$, that is,
\begin{align*}
	\frac{1}{r(x)}+\frac{1}{r'(x)}=1 \quad\text{for all }x\in\close.
\end{align*}
We have that $\Lp{r(\cdot)}^*=\Lp{r'(\cdot)}$ and H\"older's inequality 
holds true, namely
\begin{align*}
	\into |uv| \diff x \leq \l[\frac{1}{r^-}+\frac{1}{(r')^-}\r] \|u\|_{r(\cdot)}\|v\|_{r'(\cdot)} \leq 2 \|u\|_{r(\cdot)}\|v\|_{r'(\cdot)}
\end{align*}
for all $u\in \Lp{r(\cdot)}$ and for all $v \in \Lp{r'(\cdot)}$. Furthermore, if $r_1, r_2\in C_+(\close)$ and $r_1(x) \leq r_2(x)$ for all $x\in \close$, then we have the continuous embedding
\begin{align*}
	\Lp{r_2(\cdot)} \hookrightarrow \Lp{r_1(\cdot)}.
\end{align*}

Next, we are going to introduce Musielak-Orlicz Lebesgue and Sobolev spaces. To this end, suppose hypotheses \eqref{H1} and for $i=1,2$ let $\mathcal{H}_i\colon \Omega \times [0,\infty)\to [0,\infty)$ be the nonlinear function defined by
\begin{align*}
	\mathcal H_i(x,t)= t^{p_i(x)}+\mu(x)t^{q_i(x)}.
\end{align*}
The Musielak-Orlicz space $\Lp{\mathcal{H}_i}$ is defined by
\begin{align*}
	\Lp{\mathcal{H}_i}
	=\left \{u\in M(\Omega)\,:\, \rho_{\mathcal{H}_i}(u)<+\infty \right \}
\end{align*}
equipped with the Luxemburg norm
\begin{align*}
	\|u\|_{\mathcal{H}_i} = \inf \left \{ \tau >0 \,:\, \rho_{\mathcal{H}_i}\left(\frac{u}{\tau}\right) \leq 1  \right \},
\end{align*}
where the modular function $\rho_{\mathcal{H}_i}$ is given by
\begin{align*}
	\rho_{\mathcal{H}_i}(u):=\into \mathcal{H}_i(x,|u|)\,\diff x=\into \big(|u|^{p_i(x)}+\mu_i(x)|u|^{q_i(x)}\big)\,\diff x.
\end{align*}

We have the following relation between the norm $\|\cdot\|_{\mathcal{H}_i}$ and the modular $\rho_{\mathcal{H}_i}$ (see Crespo-Blanco-Gasi\'nski-Harjulehto-Winkert \cite[Proposition 2.13]{Crespo-Blanco-Gasinski-Harjulehto-Winkert-2022}).

\begin{proposition}\label{proposition_modular_properties}
	Let hypotheses \eqref{H1} be satisfied. For $i=1,2$ we have the following assertions.
	\begin{enumerate}
		\item[\textnormal{(i)}]
		If $u\neq 0$, then $\|u\|_{\mathcal{H}_i}=\lambda$ if and only if $ \rho_{\mathcal{H}_i}(\frac{u}{\lambda})=1$.
		\item[\textnormal{(ii)}]
		$\|u\|_{\mathcal{H}_i}<1$ (resp.\,$>1$, $=1$) if and only if $ \rho_{\mathcal{H}_i}(u)<1$ (resp.\,$>1$, $=1$).
		\item[\textnormal{(iii)}]
		If $\|u\|_{\mathcal{H}_i}<1$, then $\|u\|_{\mathcal{H}_i}^{q_i^+}\leqslant \rho_{\mathcal{H}_i}(u)\leqslant\|u\|_{\mathcal{H}_i}^{p_i^-}$.
		\item[\textnormal{(iv)}]
		If $\|u\|_{\mathcal{H}_i}>1$, then $\|u\|_{\mathcal{H}_i}^{p_i^-}\leqslant \rho_{\mathcal{H}_i}(u)\leqslant\|u\|_{\mathcal{H}_i}^{q_i^+}$.
		\item[\textnormal{(v)}]
		$\|u\|_{\mathcal{H}_i}\to 0$ if and only if $ \rho_{\mathcal{H}_i}(u)\to 0$.
		\item[\textnormal{(vi)}]
		$\|u\|_{\mathcal{H}_i}\to +\infty$ if and only if $ \rho_{\mathcal{H}_i}(u)\to +\infty$.
		\item[\textnormal{(vii)}]
		$\|u\|_{\mathcal{H}_i}\to 1$ if and only if $ \rho_{\mathcal{H}_i}(u)\to 1$.
		\item[\textnormal{(viii)}]
		If $u_n \to u$ in $\Lp{\mathcal{H}_i}$, then $\rho_{\mathcal{H}_i} (u_n) \to \rho_{\mathcal{H}_i} (u)$.
	\end{enumerate}
\end{proposition}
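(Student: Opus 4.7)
The plan is to derive all eight assertions from two basic modular-homogeneity estimates for $\mathcal{H}_i(x,t) = t^{p_i(x)} + \mu_i(x)t^{q_i(x)}$. Since the two summands are positively homogeneous of degrees $p_i(x)$ and $q_i(x)$ in $t$, with $1 < p_i^- \le p_i(x) \le q_i(x) \le q_i^+$, factoring out $\lambda$ gives, for every $\lambda \in (0,1]$ and $t \ge 0$,
\[
\lambda^{q_i^+}\mathcal{H}_i(x,t) \le \mathcal{H}_i(x,\lambda t) \le \lambda^{p_i^-}\mathcal{H}_i(x,t),
\]
with the reversed inequalities (and exponents swapped) for $\lambda \ge 1$. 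Integrating over $\Omega$ lifts these bounds to $\rho_{\mathcal{H}_i}$. An immediate consequence is that, for fixed $u \neq 0$, the map $\tau \mapsto \rho_{\mathcal{H}_i}(u/\tau)$ is continuous and strictly decreasing on $(0,\infty)$ from $+\infty$ at $0^+$ to $0$ at $+\infty$, and that $\mathcal{H}_i$ satisfies a $\Delta_2$-type condition.

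Item (i) is then the intermediate value theorem applied to that monotone map, read through the Luxemburg definition of $\|u\|_{\mathcal{H}_i}$; item (ii) is an immediate corollary via the same strict monotonicity. For (iii) and (iv), I would set $\lambda = \|u\|_{\mathcal{H}_i}$ and $v = u/\|u\|_{\mathcal{H}_i}$, so that $\rho_{\mathcal{H}_i}(v) = 1$ by (i); then $\lambda < 1$ in (iii) and $\lambda > 1$ in (iv), and plugging $v$ into the appropriate direction of the homogeneity inequality produces exactly the two-sided bounds $\|u\|_{\mathcal{H}_i}^{q_i^+} \le \rho_{\mathcal{H}_i}(u) \le \|u\|_{\mathcal{H}_i}^{p_i^-}$ (respectively with $p_i^-$ and $q_i^+$ swapped).

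Items (v) and (vi) then drop out by sandwiching: for $\|u_n\|_{\mathcal{H}_i} \to 0$ eventually one is in the regime of (iii), and both bracketing quantities tend to $0$; symmetrically for $+\infty$ via (iv). The converse implications come from reading the same inequalities backward. For (vii), splitting any subsequence with $\|u_n\|_{\mathcal{H}_i} \to 1$ into indices in $\{<1\}$ and $\{>1\}$ and applying (iii), (iv) respectively squeezes $\rho_{\mathcal{H}_i}(u_n)$ between $\|u_n\|_{\mathcal{H}_i}^{p_i^-}$ and $\|u_n\|_{\mathcal{H}_i}^{q_i^+}$, both of which tend to $1$; the converse direction is symmetric.

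The main obstacle is (viii). Norm convergence $u_n \to u$ in $L^{\mathcal{H}_i}(\Omega)$ implies $\rho_{\mathcal{H}_i}(u_n - u) \to 0$ by (v) applied to the sequence $u_n - u$, but to transfer this modular convergence to $\rho_{\mathcal{H}_i}(u_n) \to \rho_{\mathcal{H}_i}(u)$ I would use convexity of each summand together with the $\Delta_2$-style estimate $\mathcal{H}_i(x,a+b) \le 2^{q_i^+ - 1}\bigl(\mathcal{H}_i(x,a) + \mathcal{H}_i(x,b)\bigr)$ to bound $\mathcal{H}_i(x,|u_n|)$ by $C\bigl(\mathcal{H}_i(x,|u|) + \mathcal{H}_i(x,|u_n - u|)\bigr)$. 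Passing to a subsequence with $u_n \to u$ almost everywhere and invoking the Vitali convergence theorem — equi-integrability is supplied by the modular vanishing of the $u_n - u$ term and by $\mathcal{H}_i(\cdot,|u|) \in L^1(\Omega)$ — then yields the claim, and a standard subsequence-of-subsequence argument returns it to the full sequence.
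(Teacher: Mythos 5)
Your argument is correct: the two-sided homogeneity estimate $\lambda^{q_i^+}\mathcal{H}_i(x,t)\le \mathcal{H}_i(x,\lambda t)\le \lambda^{p_i^-}\mathcal{H}_i(x,t)$ for $\lambda\in(0,1]$ (reversed for $\lambda\ge 1$), combined with continuity and strict monotonicity of $\tau\mapsto\rho_{\mathcal{H}_i}(u/\tau)$, does deliver (i)--(vii), and your treatment of (viii) via $\rho_{\mathcal{H}_i}(u_n-u)\to 0$, the pointwise bound $\mathcal{H}_i(x,|u_n|)\le 2^{q_i^+-1}\bigl(\mathcal{H}_i(x,|u|)+\mathcal{H}_i(x,|u_n-u|)\bigr)$, an a.e.\ convergent subsequence and Vitali's theorem on the bounded domain $\Omega$ is sound (only note that (iii) needs the trivial case $u=0$ handled separately, since there you cannot normalize by $\|u\|_{\mathcal{H}_i}$). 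The paper itself does not prove this proposition but quotes it from Crespo-Blanco--Gasi\'nski--Harjulehto--Winkert \cite[Proposition 2.13]{Crespo-Blanco-Gasinski-Harjulehto-Winkert-2022}, and your proof is essentially the standard unit-ball-property argument used in that reference.
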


The Musielak-Orlicz Sobolev space $W^{1,\mathcal{H}_i}(\Omega)$ is defined by
\begin{align*}
	W^{1,\mathcal{H}_i}(\Omega)= \left \{u \in L^{\mathcal{H}_i}(\Omega) \,:\, |\nabla u| \in L^{\mathcal{H}_i}(\Omega) \right\}
\end{align*}
equipped with the norm
\begin{align*}
	\|u\|_{1,\mathcal{H}_i}= \|\nabla u \|_{\mathcal{H}_i}+\|u\|_{\mathcal{H}_i},
\end{align*}
where $\|\nabla u\|_{\mathcal{H}_i}=\|\,|\nabla u|\,\|_{\mathcal{H}_i}$ and $i=1,2$. The completion of $C^\infty_0(\Omega)$ in $W^{1,\mathcal{H}_i}(\Omega)$ is denoted by $W^{1,\mathcal{H}_i}_0(\Omega)$. We know that $\Lp{\mathcal{H}_i}$, $\Wpzero{\mathcal{H}_i}$, $\Wp{\mathcal{H}_i}$ are reflexive Banach spaces (see Crespo-Blanco-Gasi\'nski-Harjulehto-Winkert \cite[Proposition 2.12]{Crespo-Blanco-Gasinski-Harjulehto-Winkert-2022}). 

Next, we recall some embedding results for the spaces $\Lp{\mathcal{H}_i}$, $\Wpzero{\mathcal{H}_i}$, $\Wp{\mathcal{H}_i}$ (see Crespo-Blanco-Gasi\'nski-Harjulehto-Winkert \cite[Proposition 2.16]{Crespo-Blanco-Gasinski-Harjulehto-Winkert-2022}).

\begin{proposition}\label{proposition_embeddings}
	Let hypotheses \eqref{H1} be satisfied and let
	\begin{align*}
		p_i^*(x):=\frac{Np_i(x)}{N-p_i(x)} \quad \text{and}\quad (p_i)_*(x):=\frac{(N-1)p_i(x)}{N-p_i(x)}
		\quad\text{for all }x\in\close
	\end{align*}
	be the critical exponents to $p_i$ for $i=1,2$. Then the following embeddings hold for $i=1,2$:
	\begin{enumerate}
		\item[\textnormal{(i)}]
			$\Lp{\mathcal{H}_i} \hookrightarrow \Lp{r_i(\cdot)}$, $\Wp{\mathcal{H}_i}\hookrightarrow \Wp{r_i(\cdot)}$, $\Wpzero{\mathcal{H}_i}\hookrightarrow \Wpzero{r_i(\cdot)}$
		are continuous for all $r\in C(\close)$ with $1\leq r(x)\leq p_i(x)$ for all $x \in \Omega$;
		\item[\textnormal{(ii)}]
			$\Wp{\mathcal{H}_i} \hookrightarrow \Lp{r_i(\cdot)}$ and $\Wpzero{\mathcal{H}_i} \hookrightarrow \Lp{r_i(\cdot)}$ are compact for $r_i \in C(\close) $ with $ 1 \leq r_i(x) < p_i^*(x)$ for all $x\in \close$;
		\item[\textnormal{(iii)}]
			$\Wp{\mathcal{H}_i} \hookrightarrow \Lprand{r_i(\cdot)}$ and $\Wpzero{\mathcal{H}_i} \hookrightarrow \Lprand{r_i(\cdot)}$ are compact for $r_i \in C(\close) $ with $ 1 \leq r_i(x) < (p_i)_*(x)$ for all $x\in \close$;
		\item[\textnormal{(iv)}]
			$L^{q_i(\cdot)}(\Omega) \hookrightarrow \Lp{\mathcal{H}_i}$ is continuous.
	\end{enumerate}
\end{proposition}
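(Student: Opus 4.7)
The plan is to reduce each part of the proposition to the well-known variable exponent Sobolev theory on $\Wp{p_i(\cdot)}$ by exploiting the single pointwise inequality $t^{p_i(x)}\le \mathcal{H}_i(x,t)$, which is immediate from $\mu_i\ge 0$, together with the properties of the modular summarized in Proposition \ref{proposition_modular_properties}.

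For part (i), I would pick a nonzero $u \in \Lp{\mathcal{H}_i}$, set $\lambda:=\|u\|_{\mathcal{H}_i}$, and use Proposition \ref{proposition_modular_properties}(i) to obtain $\rho_{\mathcal{H}_i}(u/\lambda)=1$; the pointwise inequality then gives $\into (|u|/\lambda)^{p_i(x)}\,\diff x\le 1$, so by the Luxemburg definition $\|u\|_{p_i(\cdot)}\le \|u\|_{\mathcal{H}_i}$. The further inclusion into $\Lp{r(\cdot)}$ for $r\le p_i$ is the standard continuous embedding for variable exponent Lebesgue spaces on bounded domains. Applying the same pointwise argument to $|\nabla u|$ yields the Sobolev embedding $\Wp{\mathcal{H}_i}\hookrightarrow \Wp{p_i(\cdot)}$, and the statement for $\Wpzero{\mathcal{H}_i}$ follows by the density definition.

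For parts (ii) and (iii), I would compose the continuous embedding from (i),
\[
\Wp{\mathcal{H}_i}\hookrightarrow \Wp{p_i(\cdot)},
\]
with the classical compact embeddings available on a bounded Lipschitz domain: $\Wp{p_i(\cdot)} \hookrightarrow \Lp{r_i(\cdot)}$ is compact whenever $1\le r_i(x)<p_i^*(x)$, and the trace map $\Wp{p_i(\cdot)} \hookrightarrow \Lprand{r_i(\cdot)}$ is compact whenever $1\le r_i(x)<(p_i)_*(x)$. Since the composition of a continuous operator with a compact one is compact, this yields both compactness claims; restricting to $\Wpzero{\mathcal{H}_i}$ gives the corresponding subspace version.

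Part (iv) is the most delicate point, and the one I expect to require the most care. The reason is that the relation between $\rho_{\mathcal{H}_i}$ and $\|\cdot\|_{\mathcal{H}_i}$ splits according to whether the norm exceeds one (Proposition \ref{proposition_modular_properties}(iii)--(iv)), so one cannot simply aim for a homogeneous bound $\|u\|_{\mathcal{H}_i}\le C\|u\|_{q_i(\cdot)}$ valid for all $u$. The strategy I would follow is to estimate $\rho_{\mathcal{H}_i}(u/\tau)$ for a suitable $\tau=\tau(\|u\|_{q_i(\cdot)})\ge 1$: since $p_i\le q_i$ on the bounded set $\Omega$, the classical inclusion $\Lp{q_i(\cdot)}\hookrightarrow \Lp{p_i(\cdot)}$ controls $\into |u/\tau|^{p_i(x)}\,\diff x$, while $\mu_i\in\Linf$ gives $\into \mu_i(x)|u/\tau|^{q_i(x)}\,\diff x\le \|\mu_i\|_\infty \into |u/\tau|^{q_i(x)}\,\diff x$. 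Choosing $\tau$ large enough that the sum is at most $1$ and invoking Proposition \ref{proposition_modular_properties}(ii) then yields the desired norm bound and hence continuity of the embedding.
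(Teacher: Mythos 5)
The paper itself offers no proof of this proposition: it is recalled verbatim from Crespo-Blanco--Gasi\'nski--Harjulehto--Winkert \cite[Proposition 2.16]{Crespo-Blanco-Gasinski-Harjulehto-Winkert-2022}, so there is no internal argument to compare yours against. That said, your reduction is the standard route and essentially the one used in that reference: from $t^{p_i(x)}\le \mathcal{H}_i(x,t)$ and the unit-ball property of the Luxemburg norm you get $\|u\|_{p_i(\cdot)}\le\|u\|_{\mathcal{H}_i}$ (and the same for $|\nabla u|$), hence $\Wp{\mathcal{H}_i}\hookrightarrow \Wp{p_i(\cdot)}$, after which (i)--(iii) follow by composing with the known embeddings of $\Wp{p_i(\cdot)}$. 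One point worth making explicit in (ii)--(iii): the ``classical'' compact embedding and compact trace results you invoke hold for merely continuous exponents on a bounded Lipschitz domain (Fan--Zhao type theorems) precisely because $r_i<p_i^*$, resp.\ $r_i<(p_i)_*$, holds \emph{strictly} on the compact set $\close$; the borderline-exponent versions would require more regularity of $p_i$, so you should cite the subcritical statements, which is all that is needed here.

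Two small remarks on (iv), where your plan is correct but slightly misstated. First, the claim that one ``cannot aim for a homogeneous bound'' is misleading: the embedding is the identity, hence linear, so bounding it on the unit ball of $\Lp{q_i(\cdot)}$ already yields $\|u\|_{\mathcal{H}_i}\le C\|u\|_{q_i(\cdot)}$ for all $u$; and in fact your own modular estimate gives this directly if you take $\tau=s\|u\|_{q_i(\cdot)}$ with a fixed $s\ge 1$: then $\into \mu_i(x)|u/\tau|^{q_i(x)}\,\diff x\le \|\mu_i\|_\infty s^{-q_i^-}$ and, writing $c$ for the constant in $\Lp{q_i(\cdot)}\hookrightarrow \Lp{p_i(\cdot)}$, $\into |u/\tau|^{p_i(x)}\,\diff x\le (c/s)^{p_i^-}$ once $s\ge c$, so choosing $s$ large makes $\rho_{\mathcal{H}_i}(u/\tau)\le 1$ and hence $\|u\|_{\mathcal{H}_i}\le s\|u\|_{q_i(\cdot)}$. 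Second, in that step you must convert the norm information from the embedding $\Lp{q_i(\cdot)}\hookrightarrow \Lp{p_i(\cdot)}$ into a modular bound (via the unit-ball/modular relation for $\|\cdot\|_{p_i(\cdot)}$, as in Proposition \ref{proposition_modular_properties}(ii)--(iii) for that space); this is routine but should be said. With these adjustments your argument is complete and consistent with the cited source.
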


For $i=1,2$, let $A_i\colon \Wp{\mathcal{H}_i}  \to \Wp{\mathcal{H}_i}^*$ be defined by
\begin{align}\label{operator_representation}
	\langle A_i(u_i),v_i\rangle_{\mathcal{H}_i} :=\into \big(|\nabla u_i|^{p_i(x)-2}\nabla u_i+\mu_i(x)|\nabla u_i|^{q_i(x)-2}\nabla u_i \big)\cdot\nabla v_i \,\diff x
\end{align}
for all $u_i,v_i\in\Wp{\mathcal{H}_i} $, where $\lan\,\cdot\,,\,\cdot\,\ran_{\mathcal{H}_i}$ is the duality pairing between $\Wp{\mathcal{H}_i}$ and its dual space $\Wp{\mathcal{H}_i}^*$.  The operator $A_i\colon \Wp{\mathcal{H}_i}\to \Wp{\mathcal{H}_i}^*$ has the following properties (see Crespo-Blanco-Gasi\'nski-Harjulehto-Winkert \cite[Proposition 3.4]{Crespo-Blanco-Gasinski-Harjulehto-Winkert-2022}).

\begin{proposition}\label{properties_operator_double_phase}
	Let hypotheses \eqref{H1}  be satisfied. Then, the operators $A_i$ defined in \eqref{operator_representation} are bounded, continuous, strictly monotone and of type $(\Ss_+)$, that is,
	\begin{align*}
		u_n\weak u \quad \text{in }\Wp{\mathcal{H}_i} \quad\text{and}\quad  \limsup_{n\to\infty}\,\langle A_iu_n,u_n-u\rangle\le 0
	\end{align*}
	imply $u_n\to u$ in $\Wp{\mathcal{H}_i}$.
\end{proposition}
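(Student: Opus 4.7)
My plan is to verify the four properties—boundedness, continuity, strict monotonicity, and the $(S_+)$ property—in turn, with the $(S_+)$ property carrying the real work. For \emph{boundedness}, I estimate $|\langle A_i u, v\rangle| \leq \int_\Omega (|\nabla u|^{p_i(x)-1} + \mu_i(x)|\nabla u|^{q_i(x)-1})|\nabla v|\,dx$ using the variable-exponent H\"older inequality on the first term and its $\mu_i$-weighted version on the second (splitting $\mu_i = \mu_i^{(q_i-1)/q_i}\cdot \mu_i^{1/q_i}$ and applying H\"older to each factor). Proposition \ref{proposition_modular_properties} then converts the resulting modulars into powers of the $\mathcal{H}_i$-norm, yielding $\|A_i u\|_* \leq C(1+\|u\|_{1,\mathcal{H}_i}^{q_i^+-1})$. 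For \emph{continuity}, given $u_n\to u$ in $W^{1,\mathcal{H}_i}(\Omega)$, I pass to a subsequence along which $\nabla u_n\to\nabla u$ almost everywhere and invoke Vitali's theorem, whose equi-integrability hypothesis on $|\nabla u_n|^{p_i(x)-1}+\mu_i(x)|\nabla u_n|^{q_i(x)-1}$ follows from uniform control of $\rho_{\mathcal{H}_i}(\nabla u_n)$ via Proposition \ref{proposition_modular_properties}.

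\emph{Strict monotonicity} reduces to the pointwise inequality $(|\xi|^{r-2}\xi-|\eta|^{r-2}\eta)\cdot(\xi-\eta)>0$ for $r>1$ and $\xi\neq\eta$, applied with $r=p_i(x)$ and, on $\{\mu_i>0\}$, with $r=q_i(x)$. The $(S_+)$ property is the substantive step. Assume $u_n\rightharpoonup u$ and $\limsup_{n\to\infty}\langle A_i u_n, u_n-u\rangle\leq 0$. Weak convergence gives $\langle A_i u, u_n-u\rangle\to 0$, so the nonnegative quantity $\langle A_i u_n-A_i u, u_n-u\rangle\to 0$, and its integrand—the sum of the $p_i(x)$- and weighted $q_i(x)$-monotonicity defects—converges to $0$ in $L^1(\Omega)$, hence a.e.\ along a subsequence. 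The classical lower bounds $(|\xi|^{r-2}\xi-|\eta|^{r-2}\eta)\cdot(\xi-\eta)\geq c_r|\xi-\eta|^r$ for $r\geq 2$, and $\geq c_r|\xi-\eta|^2(|\xi|+|\eta|)^{r-2}$ for $1<r<2$, then force $\nabla u_n\to\nabla u$ a.e.

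Combined with the uniform modular bound, Fatou's lemma together with a Brezis--Lieb-type argument yields $\rho_{\mathcal{H}_i}(\nabla u_n-\nabla u)\to 0$, which by Proposition \ref{proposition_modular_properties} is equivalent to $\|u_n-u\|_{1,\mathcal{H}_i}\to 0$. The main obstacle is precisely this last step: recovering a.e.\ convergence of the gradient from the vanishing of the combined monotonicity defect in the variable-exponent, weighted setting requires dissecting $\Omega$ according to which of the two scalar inequalities above is needed pointwise, and then upgrading the a.e.\ limit to Musielak--Orlicz norm convergence rather than to convergence of the separate $L^{p_i(\cdot)}$ and $\mu_i$-weighted $L^{q_i(\cdot)}$ pieces alone.
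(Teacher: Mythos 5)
A preliminary remark: the paper does not prove this proposition at all — it is quoted from Crespo-Blanco--Gasi\'nski--Harjulehto--Winkert \cite[Proposition 3.4]{Crespo-Blanco-Gasinski-Harjulehto-Winkert-2022} — so there is no internal proof to compare against. Your route (variable-exponent H\"older plus the modular--norm dictionary for boundedness, Nemytskij continuity via Vitali, pointwise Simon-type inequalities for strict monotonicity, and the monotonicity-defect argument for $(\Ss_+)$) is the standard one and matches the cited proof in spirit.

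The one step whose stated justification does not hold up is the final upgrade in the $(\Ss_+)$ argument. A uniform modular bound, a.e.\ convergence of $\nabla u_n$, Fatou, and a Brezis--Lieb argument together do \emph{not} yield $\rho_{\mathcal{H}_i}(\nabla u_n-\nabla u)\to 0$: Brezis--Lieb gives $\rho_{\mathcal{H}_i}(\nabla u_n)-\rho_{\mathcal{H}_i}(\nabla u_n-\nabla u)\to\rho_{\mathcal{H}_i}(\nabla u)$, so you still need $\rho_{\mathcal{H}_i}(\nabla u_n)\to\rho_{\mathcal{H}_i}(\nabla u)$, and Fatou supplies only the $\liminf\geq$ half (consider $\nabla u_n=\nabla u+g_n$ with $g_n\to 0$ a.e., $g_n\weak 0$, $\rho_{\mathcal{H}_i}(g_n)=1$: all your hypotheses hold, yet the difference of modulars does not vanish). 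The missing inequality $\limsup_{n}\rho_{\mathcal{H}_i}(\nabla u_n)\leq\rho_{\mathcal{H}_i}(\nabla u)$ must be extracted from the hypothesis itself: write $\langle A_iu_n,u_n-u\rangle=\rho_{\mathcal{H}_i}(|\nabla u_n|)-\int_\Omega\big(|\nabla u_n|^{p_i(x)-2}\nabla u_n+\mu_i(x)|\nabla u_n|^{q_i(x)-2}\nabla u_n\big)\cdot\nabla u\,\diff x$; the a.e.\ convergence of the gradients together with the boundedness of the nonlinear terms in the appropriate dual Lebesgue spaces forces the second integral to converge to $\rho_{\mathcal{H}_i}(|\nabla u|)$, and then $\limsup_n\langle A_iu_n,u_n-u\rangle\leq 0$ delivers exactly the missing bound. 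With that in place, your Brezis--Lieb step does give $\rho_{\mathcal{H}_i}(|\nabla u_n-\nabla u|)\to 0$; note finally that you also need $u_n\to u$ in $L^{\mathcal{H}_i}(\Omega)$, which follows from the compact embedding in Proposition \ref{proposition_embeddings}(ii),(iv), before Proposition \ref{proposition_modular_properties}(v) converts modular convergence into $\|u_n-u\|_{1,\mathcal{H}_i}\to 0$.
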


Next, we define the product spaces
\begin{align*}
	\mathcal{L}&:=\Lp{\mathcal{H}_1}\times \Lp{\mathcal{H}_2},\\
	L^{p_1(\cdot),p_2(\cdot)}(\Omega)&:=\Lp{p_1(\cdot)}\times \Lp{p_2(\cdot)},\\
	L^{q_1(\cdot),q_2(\cdot)}(\Omega)&:=\Lp{q_1(\cdot)}\times \Lp{q_2(\cdot)},\\
	L^{p_1(\cdot),p_2(\cdot)}(\partial\Omega)&:=\Lprand{p_1(\cdot)}\times \Lprand{p_2(\cdot)},\\
	\mathcal{W}&:=\Wp{\mathcal{H}_1}\times \Wp{\mathcal{H}_2}\\
\end{align*}
equipped with the norms
\begin{align*}
	\|u\|_{\mathcal{L}}&=\|u\|_{\mathcal{H}_1}+\|u\|_{\mathcal{H}_2},\\
	\|u\|_{L^{p_1(\cdot),p_2(\cdot)}(\Omega)}&=\|u\|_{p_1(\cdot)}+\|u\|_{p_2(\cdot)},\\
	\|u\|_{L^{q_1(\cdot),q_2(\cdot)}(\Omega)}&=\|u\|_{q_1(\cdot)}+\|u\|_{q_2(\cdot)},\\
	\|u\|_{L^{p_1(\cdot),p_2(\cdot)}(\partial\Omega)}&=\|u\|_{p_1(\cdot),\partial\Omega}+\|u\|_{p_2(\cdot),\partial\Omega},\\
	\|u\|_{\mathcal{W}}&=\|u\|_{1,\mathcal{H}_1}+\|u\|_{1,\mathcal{H}_2},
\end{align*}
respectively. Based on Proposition \ref{proposition_embeddings} we have the compact embeddings
\begin{align}\label{compacness-embedding}
	\mathcal{W}\hookrightarrow\mathcal{L}, \quad \mathcal{W}\hookrightarrow L^{p_1(\cdot),p_2(\cdot)}(\Omega), \quad \mathcal{W}\hookrightarrow L^{q_1(\cdot),q_2(\cdot)}(\Omega), \quad \mathcal{W}\hookrightarrow L^{p_1(\cdot),p_2(\cdot)}(\partial\Omega).
\end{align}

\begin{definition}
	We say that $(u_1,u_2)\in\mathcal{W}$ is a weak solution of problem \eqref{problem} if
	\begin{align}\label{defsol1}
		\begin{split}
			&\into \l(|\nabla u_1|^{p_1(x)-2}\nabla u_1+\mu_1(x)|\nabla u_1|^{q_1(x)-2}\nabla u_1\r)\cdot \nabla v_1\,\diff x
			-\int_{\partial\Omega} g_1(x,u_1,u_2)v_1\,\diff \sigma\\
			&=\into f_1(x,u_1,u_2,\nabla u_1,\nabla u_2)v_1\,\diff x\\
		\end{split}
	\end{align}
	and
	\begin{align}\label{defsol2}
		\begin{split}
			&\into \l(|\nabla u_2|^{p_2(x)-2}\nabla u_2+\mu_2(x)|\nabla u_2|^{q_2(x)-2}\nabla u_2\r)\cdot \nabla v_2\,\diff x
			-\int_{\partial\Omega} g_2(x,u_1,u_2)v_2\,\diff \sigma\\
			&=\into f_2(x,u_1,u_2,\nabla u_1,\nabla u_2)v_2\,\diff x\\
		\end{split}
	\end{align}
	hold true for all $(v_1,v_2)\in\mathcal{W}$ and all the integrals in \eqref{defsol1} and \eqref{defsol2} are finite.
\end{definition}

Next, we introduce the notion of weak sub- and supersolution to \eqref{problem}.

\begin{definition}\label{def-sub-supersolution}
	We say that $(\underline{u}_1,\underline{u}_2)$, $(\overline{u}_1, \overline{u}_2)\in \mathcal{W}$ form a pair of sub- and supersolution of problem \eqref{problem} if $\underline{u}_i\leq \overline{u}_i$ a.\,e.\,in $\Omega$ for $i=1,2$ and
	\begin{align}\label{defsubsuper1}
		\begin{split}
			&\into \l(|\nabla \underline{u}_1|^{p_1(x)-2}\nabla \underline{u}_1+\mu_1(x)|\nabla \underline{u}_1|^{q_1(x)-2}\nabla \underline{u}_1\r)\cdot \nabla v_1\,\diff x
			-\into f_1(x,\underline{u}_1,w_2,\nabla \underline{u}_1,\nabla w_2)v_1\,\diff x\\
			&-\int_{\partial\Omega} g_1(x,\underline{u}_1,w_2)v_1\,\diff \sigma\\
			&+\into \l(|\nabla \underline{u}_2|^{p_2(x)-2}\nabla \underline{u}_2+\mu_2(x)|\nabla \underline{u}_2|^{q_2(x)-2}\nabla \underline{u}_2\r)\cdot \nabla v_2\,\diff x
			-\into f_2(x,w_1,\underline{u}_2,\nabla w_1,\nabla \underline{u}_2)v_2\,\diff x\\
			&-\int_{\partial\Omega} g_2(x,w_1,\underline{u}_2)v_2\,\diff \sigma \leq 0
		\end{split}
	\end{align}
	and
	\begin{align}\label{defsubsuper2}
		\begin{split}
			&\into \l(|\nabla \overline{u}_1|^{p_1(x)-2}\nabla \overline{u}_1+\mu_1(x)|\nabla \overline{u}_1|^{q_1(x)-2}\nabla \overline{u}_1\r)\cdot \nabla v_1\,\diff x
			-\into f_1(x,\overline{u}_1,w_2,\nabla \overline{u}_1,\nabla w_2)v_1\,\diff x\\
			&-\int_{\partial\Omega} g_2(x,\overline{u}_1,w_2)v_1\,\diff \sigma\\
			&+\into \l(|\nabla \overline{u}_2|^{p_2(x)-2}\nabla \overline{u}_2+\mu_2(x)|\nabla \overline{u}_2|^{q_2(x)-2}\nabla \overline{u}_2\r)\cdot \nabla v_2\,\diff x
			-\into f_2(x,w_1,\overline{u}_2,\nabla w_1,\nabla \overline{u}_2)v_2\,\diff x\\
			&-\int_{\partial\Omega} g_2(x,w_1,\overline{u}_2)v_2\,\diff \sigma \geq 0
		\end{split}
	\end{align}
	for all $(v_1,v_2)\in\mathcal{W}$, $v_1,v_2\geq 0$ a.\,e.\,in $\Omega$ and for all $(w_1,w_2)\in \mathcal{W}$ such that $\underline{u}_i\leq w_i\leq \overline{u}_i$  for $i=1,2$ and with all integrals in \eqref{defsubsuper1} and \eqref{defsubsuper2} to be finite.

	If $\underline{u}=(\underline{u}_1,\underline{u}_2)$, $\overline{u}=(\overline{u}_1,\overline{u}_2)$ is a pair of sub- and supersolution, then the order interval $[\underline{u},\overline{u}]=[\underline{u}_1,\overline{u}_1] \times [\underline{u}_2,\overline{u}_2]$ is called trapping region, where
	\begin{align*}
		\l[\underline{u}_i,\overline{u}_i\r]
		=\big\{u\in\Wp{\mathcal{H}_i}\,:\,\underline{u}_i\leq u\leq \overline{u}_i\text{ a.\,e.\,in }\Omega\big\}.
	\end{align*}
\end{definition}

We now recall some definitions that we will use in the sequel (see Carl-Le-Motreanu \cite[Definitions 2.95 and 2.96]{Carl-Le-Motreanu-2007}).

\begin{definition}\label{SplusPM}
	Let $X$ be a reflexive Banach space, $X^*$ its dual space, and denote by $\langle \cdot \,, \cdot\rangle$ its duality pairing. Let $A\colon X\to X^*$. Then $A$ is called
	\begin{enumerate}[leftmargin=1cm]
		\item[\textnormal{(i)}]
			coercive if
			\begin{align*}
				\lim_{\|u\|_X \to \infty} \frac{\langle Au, u \rangle}{\|u\|_X} = +\infty.
			\end{align*}
		\item[\textnormal{(ii)}]
			completely continuous if  $u_n \weak u$ in $X$ implies $Au_n\to Au$ in $X^*$.
		\item[\textnormal{(iii)}]
			demicontinuous if  $u_n \to u$ in $X$ implies $Au_n\weak Au$ in $X^*$.
		\item[\textnormal{(iv)}]
			pseudomonotone if 
			\begin{align*}
				u_n \weak u \quad \text{in } X
				\quad\text{and}\quad 
				\limsup_{n\to \infty}\, \langle Au_n,u_n-u\rangle \leq 0
			\end{align*}
			imply 
			\begin{align*}
				\liminf_{n \to \infty}\, \langle Au_n,u_n-v\rangle \geq \langle Au , u-v\rangle \quad\text{for all }v \in X.
			\end{align*}
		\item[\textnormal{(v)}]
			to satisfy the $(\Ss_+)$-property if 
			\begin{align*}
				u_n \weak u \quad\text{in }X 
				\quad\text{and}\quad \limsup_{n\to \infty}\, \langle Au_n,u_n-u\rangle \leq 0 
			\end{align*}
			imply $u_n\to u$ in $X$.
	\end{enumerate}
\end{definition}

\begin{remark}\label{comp-compcont}
In the context of Definition \ref{SplusPM}, any completely continuous operator is compact and any linear compact operator is completely continuous (see Zeidler \cite[Proposition 26.2]{Zeidler-1990}).
\end{remark}

The following helpful lemma can be found, for example, in Franc$\mathring{\text{u}}$ \cite[Lemma 6.7]{Francu-1990} (see also Zeidler \cite[Proposition 27.6]{Zeidler-1990}).

\begin{lemma}\label{lemma-pseudomonotone}
	Let $X$ be a reflexive Banach space and let $A\colon X\to X^*$ be a demicontinuous operator satisfying the $(\Ss_+)$-property. Then $A$ is pseudomonotone.
\end{lemma}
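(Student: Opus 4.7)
The plan is to unpack the definition of pseudomonotonicity and use the two hypotheses one after the other. Assume $u_n \weak u$ in $X$ and $\limsup_{n\to\infty}\langle Au_n, u_n - u\rangle \leq 0$. I need to produce, for every $v \in X$, the inequality
\[
\liminf_{n\to\infty}\langle Au_n, u_n - v\rangle \geq \langle Au, u - v\rangle.
\]

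First I would apply the $(\Ss_+)$-property directly to the standing hypotheses: since $u_n \weak u$ and $\limsup_{n\to\infty}\langle Au_n, u_n - u\rangle \leq 0$, the property forces $u_n \to u$ strongly in $X$. Then I would invoke demicontinuity on this strong convergence to deduce $Au_n \weak Au$ in $X^*$. In particular, by the Banach–Steinhaus theorem, $\{Au_n\}$ is bounded in $X^*$, say $\|Au_n\|_{X^*} \leq C$.

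The rest is a direct estimate. For any fixed $v \in X$, decompose
\[
\langle Au_n, u_n - v\rangle = \langle Au_n, u_n - u\rangle + \langle Au_n, u - v\rangle.
\]
The first term satisfies $|\langle Au_n, u_n - u\rangle| \leq C \|u_n - u\|_X \to 0$ thanks to the strong convergence $u_n \to u$ together with the boundedness of $\{Au_n\}$. The second term tends to $\langle Au, u - v\rangle$ because $Au_n \weak Au$ in $X^*$ tested against the fixed element $u - v \in X$. Passing to the limit yields $\lim_{n\to\infty}\langle Au_n, u_n - v\rangle = \langle Au, u - v\rangle$, which in particular gives the required $\liminf$ inequality (with equality).

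There is really no serious obstacle: the subtle point worth flagging is that demicontinuity by itself would not guarantee boundedness of $\{Au_n\}$, but once we have the strong convergence from $(\Ss_+)$ and thus weak convergence of $Au_n$ via demicontinuity, uniform boundedness takes care of it. The argument would fail in general if one tried to carry it out only under the weak convergence $u_n \weak u$ without first upgrading it to strong convergence, which is exactly where the $(\Ss_+)$-property is indispensable.
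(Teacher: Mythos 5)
Your argument is correct: the $(\Ss_+)$-property upgrades the weak convergence to strong convergence, demicontinuity then gives $Au_n\rightharpoonup Au$ (hence boundedness of $\{Au_n\}$ by Banach--Steinhaus), and the decomposition $\langle Au_n,u_n-v\rangle=\langle Au_n,u_n-u\rangle+\langle Au_n,u-v\rangle$ yields the required limit. The paper does not prove this lemma but refers to Franc\r{u} and Zeidler, and your proof is essentially the standard argument given there, so nothing further is needed.
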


The next lemma is recently obtained in Gambera-Guarnotta \cite[Lemma 2.2]{Gambera-Guarnotta-2022}.

\begin{lemma}\label{lemma-splus}
	Let $X$ be a Banach space, $A\colon X\to X^*$ be of type $(\Ss_+)$, and $B\colon X\to X^*$ be compact. Then $A+B$ is of type $(\Ss_+)$ as well. 
\end{lemma}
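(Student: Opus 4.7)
The plan is to verify the $(\Ss_+)$ property of $A+B$ directly from the definition by splitting the duality pairing and exploiting the compactness of $B$ to neutralise its contribution, so that the hypothesis on $A+B$ transfers to $A$ alone.

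Concretely, I would start with a sequence $(u_n)\subset X$ such that $u_n\weak u$ in $X$ and $\limsup_{n\to\infty}\langle (A+B)u_n,u_n-u\rangle\le 0$, and aim to show that $\langle Bu_n,u_n-u\rangle\to 0$. Since weakly convergent sequences are bounded, $(u_n)$ is bounded in $X$; by compactness of $B$, the image sequence $(Bu_n)$ is relatively compact in $X^*$. A standard subsequence-of-every-subsequence argument then reduces the task to showing that along any strongly convergent subsequence $Bu_{n_k}\to w$ in $X^*$, one has $\langle Bu_{n_k},u_{n_k}-u\rangle\to 0$. This follows from the splitting
\begin{equation*}
\bigl|\langle Bu_{n_k},u_{n_k}-u\rangle\bigr|
\le \|Bu_{n_k}-w\|_{X^*}\,\|u_{n_k}-u\|_{X}
+\bigl|\langle w,u_{n_k}-u\rangle\bigr|,
\end{equation*}
where the first term vanishes because $(u_{n_k}-u)$ is bounded and $Bu_{n_k}\to w$ strongly, while the second vanishes because $w\in X^*$ is fixed and $u_{n_k}-u\weak 0$ in $X$.

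Once this is established, the identity
\begin{equation*}
\langle Au_n,u_n-u\rangle=\langle (A+B)u_n,u_n-u\rangle-\langle Bu_n,u_n-u\rangle
\end{equation*}
yields $\limsup_{n\to\infty}\langle Au_n,u_n-u\rangle\le 0$, and the $(\Ss_+)$ property of $A$ immediately delivers $u_n\to u$ in $X$, which is precisely what needs to be shown for $A+B$.

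The only delicate point is the justification that $\langle Bu_n,u_n-u\rangle\to 0$ for the \emph{full} sequence, not merely a subsequence; this is where the subsequence principle and the fact that the candidate limit $0$ is independent of the extracted subsequence are essential. No continuity or identification of $w$ with $Bu$ is required, which is convenient since the paper's notion of compactness (as clarified in Remark \ref{comp-compcont}) is strictly weaker than complete continuity and does not by itself guarantee $Bu_n\to Bu$.
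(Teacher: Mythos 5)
Your proof is correct. The paper does not prove Lemma \ref{lemma-splus} at all --- it simply quotes it from Gambera--Guarnotta [Lemma 2.2] --- and your argument is the standard one behind that result: use relative compactness of $(Bu_n)$ in $X^*$ together with $u_n-u\weak 0$ to get $\langle Bu_n,u_n-u\rangle\to 0$ (your subsequence-of-subsequences step correctly yields this for the full sequence, and rightly avoids needing $w=Bu$), transfer the $\limsup$ condition to $A$, and conclude with the $(\Ss_+)$-property of $A$.
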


We are going to apply the following surjectivity result for pseudomonotone operators (see, for example, Papageorgiou-Winkert \cite[Theorem 6.1.57]{Papageorgiou-Winkert-2018}).

\begin{theorem}\label{theorem_pseudomonotone}
	Let $X$ be a real, reflexive Banach space, let $A\colon X\to X^*$ be a pseudomonotone, bounded, and coercive operator, and $b\in X^*$. Then, a solution of the equation $Au=b$ exists.
\end{theorem}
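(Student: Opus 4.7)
My plan is to use a Galerkin approximation combined with the pseudomonotonicity hypothesis, following the classical Brezis--Browder strategy. Assuming first (without loss of generality, by restricting to a suitable closed separable subspace) that $X$ is separable, I fix an increasing sequence $X_1 \subset X_2 \subset \cdots$ of finite-dimensional subspaces of $X$ whose union is dense. The strategy is: (i) solve the finite-dimensional Galerkin equation on each $X_n$; (ii) extract a weakly convergent subsequence of these solutions via coercivity and reflexivity; (iii) verify the $\limsup$ condition required by pseudomonotonicity using the density of $\bigcup_n X_n$ together with the boundedness of $A$; (iv) pass to the limit to obtain $Au = b$.

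For step (i), on a finite-dimensional subspace the map $u \mapsto (\langle Au, \phi_j\rangle)_{j=1,\dots,n}$ induced by a basis $\{\phi_j\}$ of $X_n$ is continuous, since a bounded pseudomonotone operator on a reflexive Banach space is demicontinuous and on a finite-dimensional subspace weak and strong topologies coincide. Coercivity produces $R>0$ such that $\langle Au - b, u\rangle > 0$ whenever $u\in X_n$ satisfies $\|u\|_X = R$; a classical consequence of Brouwer's fixed point theorem then yields $u_n \in X_n$ with $\|u_n\|_X \leq R$ and $\langle Au_n, v\rangle = \langle b, v\rangle$ for all $v \in X_n$. Testing with $v = u_n$ and applying coercivity gives a uniform bound $\|u_n\|_X \leq C$, so up to a subsequence $u_n \weak u$ in $X$.

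For step (iii), I choose $w_n \in X_n$ with $w_n \to u$ strongly (available by density of $\bigcup_n X_n$) and decompose
\begin{align*}
	\langle Au_n, u_n - u\rangle
	&= \langle Au_n, u_n - w_n\rangle + \langle Au_n, w_n - u\rangle \\
	&= \langle b, u_n - w_n\rangle + \langle Au_n, w_n - u\rangle,
\end{align*}
where the second equality uses the Galerkin identity applied to $v = u_n - w_n \in X_n$. Both terms tend to zero: the first because $u_n - w_n \weak 0$ in $X$, the second because $\{Au_n\}$ is bounded in $X^*$ by boundedness of $A$ while $w_n - u \to 0$ strongly. Thus $\limsup_{n\to\infty}\langle Au_n, u_n - u\rangle \leq 0$.

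Pseudomonotonicity then yields $\liminf_{n\to\infty}\langle Au_n, u_n - v\rangle \geq \langle Au, u - v\rangle$ for every $v \in X$. If $v \in X_m$ and $n \geq m$, the Galerkin identity together with $\langle Au_n, u_n\rangle = \langle b, u_n\rangle$ gives $\lim_{n\to\infty}\langle Au_n, u_n - v\rangle = \langle b, u - v\rangle$, so $\langle Au - b, u - v\rangle \leq 0$ for $v \in \bigcup_m X_m$ and, by density of this union and continuity of $\langle Au - b, \cdot\rangle$, for all $v \in X$. Substituting $v = u \pm w$ with arbitrary $w \in X$ yields $\langle Au - b, w\rangle = 0$, hence $Au = b$. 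The most delicate point is step (iii): one must produce an approximating sequence $w_n \in X_n$ converging strongly to $u$ and exploit the Galerkin identity precisely so that \emph{both} pieces of the decomposition vanish; without this combination, the critical $\limsup$ assumption of pseudomonotonicity cannot be activated.
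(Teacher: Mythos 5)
The paper does not actually prove this theorem: it is quoted verbatim from Papageorgiou--Winkert \cite{Papageorgiou-Winkert-2018} (Theorem 6.1.57), so your argument can only be measured against the standard textbook proof---and that is essentially what you reproduce. In the separable case your Galerkin scheme is correct and complete in all the essential points: demicontinuity of a bounded pseudomonotone operator legitimizes the continuity of the finite-dimensional maps, the acute-angle consequence of Brouwer's theorem plus coercivity yields Galerkin solutions with a bound $\|u_n\|_X\le R$ independent of $n$, and the decomposition $\langle Au_n,u_n-u\rangle=\langle b,u_n-w_n\rangle+\langle Au_n,w_n-u\rangle$ with $w_n\in X_n$, $w_n\to u$, is exactly the device needed to verify the $\limsup$ condition; the final passage via pseudomonotonicity and density of $\bigcup_n X_n$ is also correct.

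The one genuine gap is the opening claim that one may assume $X$ separable ``without loss of generality, by restricting to a suitable closed separable subspace''. The theorem is stated for an arbitrary real reflexive $X$, and this reduction does not work as written: if $Y\subseteq X$ is a closed separable subspace and $i\colon Y\to X$ the inclusion, then $i^*\circ A\circ i\colon Y\to Y^*$ is indeed pseudomonotone, bounded and coercive, but solving $i^*Au=i^*b$ only produces $u\in Y$ with $\langle Au-b,v\rangle=0$ for all $v\in Y$, i.e. $Au-b$ annihilates $Y$, which is far from $Au=b$ in $X^*$; no separable subspace fixed in advance can be ``suitable'' in this sense. To cover the nonseparable case one needs a genuinely different limiting argument, e.g. Browder's scheme over the directed family of all finite-dimensional subspaces $F\subseteq X$ (uniformly bounded Galerkin solutions $u_F$, weak compactness of the ball and the finite-intersection property of the weak closures of the tails $\{u_{F'}:F'\supseteq F\}$, followed by the same pseudomonotonicity step along suitably extracted sequences), or an iterative construction of a separable subspace adapted to $A$ and $b$. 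Alternatively, you could state your result for separable reflexive $X$---which in fact suffices for the application in this paper, since the Musielak--Orlicz Sobolev spaces involved are separable---but then it is formally weaker than the theorem as quoted.
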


Finally, for any $s \in \R$ we denote $s_\pm = \max \{ \pm s, 0 \}$, that means $s = s_+ - s_-$ and $|s| = s_+ + s_-$. For any function $v \colon \Omega \to \R$ we denote $v_\pm (\cdot) = [v(\cdot)]_\pm$.

\section{Sub-Supersolution approach}\label{Section3}

In this section we are going to prove a sub- and supersolution existence result for the system \eqref{problem} under very general structure conditions on the data. 

Let $\underline{u}=(\underline{u}_1,\underline{u}_2)$, $\overline{u}=(\overline{u}_1,\overline{u}_2)$ be a pair of sub- and supersolution of problem \eqref{problem} in the sense of Definition \ref{def-sub-supersolution}. We suppose the following assumptions.

\begin{enumerate}[label=\textnormal{(H$2$)},ref=\textnormal{H$2$}]
	\item\label{H2}
	For $i=1,2$ the functions $f_i\colon\Omega \times \R\times\R\times \R^N\times\R^N\to\R$ and $g_i\colon\partial\Omega\times \R\times \R \to\R$ are Carath\'eodory functions  satisfying the following conditions:
	\begin{enumerate}[itemsep=0.2cm,label=\textnormal{(\roman*)},ref=\textnormal{(H$2$)(\roman*)}]
		\item\label{H2i}
		there exist $\ph_i\in \Lp{p_i'(\cdot)}$ and $c_i>0$ such that
		\begin{align*}
			|f_1(x,s_1,s_2,\xi_1,\xi_2)| \leq \ph_1(x)+c_1\l(|\xi_1|^{p_1(x)-1}+|\xi_2|^{\frac{p_2(x)}{p_1'(x)}}\r),\\
			|f_2(x,s_1,s_2,\xi_1,\xi_2)| \leq \ph_2(x)+c_2\l(|\xi_1|^{\frac{p_1(x)}{p_2'(x)}}+|\xi_2|^{p_2(x)-1}\r),
		\end{align*}
		for a.\,a.\,$x\in\Omega$, for all $s=(s_1,s_2)\in [\underline{u}(x),\overline{u}(x)]$, and for all $\xi_i\in\R^N$;
		\item\label{H2ii}
		there exist $\psi_i\in \Lprand{p_i'(\cdot)}$ such that
		\begin{align*}
			|g_1(x,s_1,s_2)| \leq \psi_1(x)
			\quad\text{and}\quad
			|g_2(x,s_1,s_2)| \leq \psi_2(x),
		\end{align*}
		for a.\,a.\,$x\in\Omega$ and for all $s=(s_1,s_2)\in [\underline{u}(x),\overline{u}(x)]$.
	\end{enumerate}
\end{enumerate}

\begin{remark}
	Under hypotheses  \eqref{H1} and \eqref{H2}, for any $(u_1,u_2) \in \mathcal{W}\cap[\underline{u},\overline{u}]$, all the integrals appearing in \eqref{defsol1} and  \eqref{defsol2} are finite.
\end{remark}

Our main theorem in this section reads as follows.

\begin{theorem}\label{theorem-sub-supersolution}
	Let hypotheses \eqref{H1} and \eqref{H2} be satisfied. If $[\underline{u},\overline{u}]$ is a trapping region of \eqref{problem}, then the system in \eqref{problem} has a solution $u\in\mathcal{W}\cap [\underline{u},\overline{u}]$. 
\end{theorem}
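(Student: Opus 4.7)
The plan is to prove Theorem \ref{theorem-sub-supersolution} by a truncation-penalization scheme combined with the surjectivity result for pseudomonotone operators stated in Theorem \ref{theorem_pseudomonotone}.

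First I would define the Lipschitz truncation $T_i(u) := \max(\underline{u}_i, \min(u, \overline{u}_i))$ for $i=1,2$, which maps $W^{1,\mathcal{H}_i}(\Omega)$ into itself and satisfies, by the chain rule, $\nabla T_i(u_i) = \nabla u_i$ on $\{\underline{u}_i \leq u_i \leq \overline{u}_i\}$, $\nabla T_i(u_i) = \nabla \overline{u}_i$ on $\{u_i > \overline{u}_i\}$, and $\nabla T_i(u_i) = \nabla \underline{u}_i$ on $\{u_i < \underline{u}_i\}$. Using these I would form the truncated Nemytskii maps $\hat{f}_i(x,u_1,u_2,\nabla u_1,\nabla u_2) := f_i(x,T_1 u_1,T_2 u_2,\nabla T_1 u_1,\nabla T_2 u_2)$ and $\hat{g}_i(x,u_1,u_2) := g_i(x,T_1 u_1, T_2 u_2)$, and add a penalty Nemytskii term $\lambda_i(x,u_i) := (u_i - \overline{u}_i)_+^{p_i(x)-1} - (\underline{u}_i - u_i)_+^{p_i(x)-1}$ that penalises deviations from $[\underline{u}_i, \overline{u}_i]$ and provides coercivity. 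The auxiliary problem then reads $\mathcal{T}(u) = 0$ in $\mathcal{W}^*$, where $\mathcal{T}(u) := (A_1(u_1), A_2(u_2)) + N_\lambda(u) - N_{\hat{f}}(u) - N_{\hat{g}}(u)$.

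The next step is to verify the hypotheses of Theorem \ref{theorem_pseudomonotone} for $\mathcal{T}$: boundedness follows from \eqref{H2} and Proposition \ref{proposition_embeddings}; coercivity combines the gradient $p_i^-$-growth of $\langle A_i u_i, u_i \rangle_{\mathcal{H}_i}$ with the $p_i(x)$-growth of the penalty outside the order interval, Young's inequality being used to absorb the $L^{p_i'(\cdot)}$-controlled convection term. For pseudomonotonicity I would apply Lemma \ref{lemma-pseudomonotone} after establishing the $(\Ss_+)$-property of $\mathcal{T}$: if $u_n \weak u$ in $\mathcal{W}$ and $\lims \langle \mathcal{T}(u_n), u_n - u \rangle \leq 0$, the compact embeddings \eqref{compacness-embedding} together with \eqref{H2} and H\"older's inequality force $\langle N_\lambda(u_n) - N_{\hat{f}}(u_n) - N_{\hat{g}}(u_n), u_n - u \rangle \to 0$ (the $L^{p_i'(\cdot)}$-boundedness of $\hat{f}_i(\cdot,u_n,\nabla u_n)$ pairs with strong $L^{p_i(\cdot)}(\Omega)$-convergence of $u_{i,n}$), so that $\lims \langle (A_1, A_2)(u_n), u_n - u \rangle \leq 0$, and Proposition \ref{properties_operator_double_phase} yields $u_n \to u$ in $\mathcal{W}$. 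Theorem \ref{theorem_pseudomonotone} then delivers $u \in \mathcal{W}$ with $\mathcal{T}(u) = 0$.

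Finally, I would show $u \in [\underline{u}, \overline{u}]$, so that the truncations and penalty are inactive and $u$ is a solution of \eqref{problem}. To prove $u_1 \leq \overline{u}_1$, test $\mathcal{T}(u) = 0$ with $v_1 = (u_1 - \overline{u}_1)_+$, $v_2 = 0$ and subtract \eqref{defsubsuper2} evaluated with $w_2 = T_2 u_2 \in [\underline{u}_2, \overline{u}_2]$ and the same test pair. On $\{u_1 > \overline{u}_1\}$ the identities $T_1 u_1 = \overline{u}_1$ and $\nabla T_1 u_1 = \nabla \overline{u}_1$ cause the $\hat{f}_1$ and $\hat{g}_1$ contributions to cancel their counterparts in \eqref{defsubsuper2}, leaving
\[
\int_{\{u_1 > \overline{u}_1\}} \bigl(a_1(\nabla u_1) - a_1(\nabla \overline{u}_1)\bigr) \cdot \bigl(\nabla u_1 - \nabla \overline{u}_1\bigr) \, \diff x + \into (u_1 - \overline{u}_1)_+^{p_1(x)} \, \diff x \leq 0,
\]
where $a_1(\xi) := |\xi|^{p_1(x)-2}\xi + \mu_1(x)|\xi|^{q_1(x)-2}\xi$; strict monotonicity of $a_1$ and positivity of the penalty force $(u_1 - \overline{u}_1)_+ = 0$ almost everywhere, and three analogous comparisons yield $\underline{u}_1 \leq u_1$ and $\underline{u}_2 \leq u_2 \leq \overline{u}_2$. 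The main obstacle will be the pseudomonotonicity step: because the convection term makes $N_{\hat{f}}$ non-compact, one must exploit the precise $L^{p_i'(\cdot)}$-integrability supplied by \eqref{H2} together with the compact embeddings \eqref{compacness-embedding} to drive the critical pairing to zero, thereby triggering the $(\Ss_+)$-property of the leading operator.
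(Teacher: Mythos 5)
Your strategy is the same as the paper's: truncate the arguments of $f_i,g_i$ into the trapping region, add a penalty term, solve the auxiliary problem via the $(\Ss_+)$/pseudomonotonicity route and Theorem \ref{theorem_pseudomonotone}, then recover $u\in[\underline{u},\overline{u}]$ by testing with $(u_i-\overline{u}_i)_+$, $(\underline{u}_i-u_i)_+$ and using the coupled sub-/supersolution inequalities with $w_j=T_ju_j$ together with the monotonicity of $A_i$. Your comparison step and your direct verification of the $(\Ss_+)$-property are correct (the paper packages the latter through Lemma \ref{lemma-splus} and Lemma \ref{lemma-pseudomonotone}; note, incidentally, that your side remark that the truncated convection operator is non-compact is inaccurate: it maps bounded sets of $\mathcal{W}$ into bounded sets of $L^{p_1'(\cdot)}(\Omega)\times L^{p_2'(\cdot)}(\Omega)$, and the embedding of that space into $\mathcal{W}^*$ is compact, which is exactly what the paper exploits; also record demicontinuity of your operator, since Lemma \ref{lemma-pseudomonotone} needs it).

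The genuine gap is in the coercivity step, caused by your choice of penalty $(u_i-\overline{u}_i)_+^{p_i(x)-1}-(\underline{u}_i-u_i)_+^{p_i(x)-1}$ with a \emph{fixed unit coefficient}. When you estimate $\langle N_{\hat f}(u),u\rangle$, Young's inequality applied to $c_1|\nabla(T_1u_1)|^{p_1(x)-1}|u_1|$ yields $\eps|\nabla(T_1u_1)|^{p_1(x)}+C_{\eps}|u_1|^{p_1(x)}$, and $\eps$ can only be taken of a fixed moderate size (just small enough that the gradient modular coming from $\mathcal{A}$ survives, cf. the factor $1-3\eps$ in \eqref{coercive-1}); hence $C_\eps$ is a fixed constant depending on $c_1$ which may be much larger than $1$. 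Your penalty, paired with $u_1$, contributes asymptotically at most a constant of order one times $\int_\Omega|u_1|^{p_1(x)}\,\diff x$, so for large $c_i$ the lower bound on $\langle\mathcal{T}(u),u\rangle$ need not tend to $+\infty$: coercivity fails as written. This is precisely why the paper writes the penalty as $\lambda_i b_i$ with $b_i$ of power $q_i(x)-1$ and fixes $\lambda_i$ large only \emph{after} the estimate; either insert such a large parameter in front of your $p_i$-power penalty, or use the $q_i(x)-1$ power, which lets you absorb $C_\eps|u_i|^{p_i(x)}$ pointwise since $\inf(q_i-p_i)>0$. A second, related point: coercivity is with respect to $\|\cdot\|_{1,\mathcal{H}_i}$, whose modular contains $\mu_i(x)|u_i|^{q_i(x)}$; the paper's $q_i$-power penalty controls the full modular $\rho_{\mathcal{H}_i}(u_i)$ directly, whereas with a $p_i$-power penalty you must additionally invoke the equivalence $\|u\|_{1,\mathcal{H}_i}\simeq\|\nabla u\|_{\mathcal{H}_i}+\|u\|_{p_i(\cdot)}$ (valid because $W^{1,p_i(\cdot)}(\Omega)\hookrightarrow L^{q_i(\cdot)}(\Omega)\hookrightarrow L^{\mathcal{H}_i}(\Omega)$, as $q_i<p_i^*$) and handle the boundary estimate \eqref{estG} accordingly; this extra argument is missing from your sketch.
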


\begin{proof}
	We split the proof into three steps.
	
	{\bf Step 1:} Preliminaries
	
	First, we introduce truncation operators $T_k\colon \Wp{\mathcal{H}_k}\to\Wp{\mathcal{H}_k}$ for $k=1,2$ defined by
	\begin{align}\label{truncation-Tk}
		T_k(u_k)(x)=
		\begin{cases}
			\overline{u}_k(x)&\text{if }u_k(x)>\overline{u}_k(x),\\
			u_k(x) &\text{if }\underline{u}_k(x)\leq u_k(x)\leq \overline{u}_k(x),\\
			\underline{u}_k(x)&\text{if }u_k(x)<\underline{u}_k(x).
		\end{cases}
	\end{align}
	We know that $T_k\colon \Wp{\mathcal{H}_k}\to\Wp{\mathcal{H}_k}$ are continuous and bounded. Next, we introduce the cut-off functions $b_k\colon \Omega\times\R\to\R$ for $k=1,2$ defined by
	\begin{align}\label{def-b}
		b_k(x,s)=
		\begin{cases}
			(s-\overline{u}_k(x))^{q_k(x)-1}&\text{if }s>\overline{u}_k(x),\\
			0 &\text{if }\underline{u}_k(x)\leq s\leq \overline{u}_k(x),\\
			-(\underline{u}_k(x)-s)^{q_k(x)-1}&\text{if }s<\underline{u}_k(x).
		\end{cases}
	\end{align}
	It is clear that $b_k$ are Carath\'eodory functions fulfilling the growth
	\begin{align}\label{growth-b}
		|b_k(x,s)|\leq \hat{\ph}_k(x)+\hat{c}_k|s|^{q_k(x)-1}
	\end{align}
	for a.\,a.\,$x\in\Omega$ and for all $s\in\R$, where $\hat{\ph}_k\in \Lp{q_i'(\cdot)}$ and $\hat{c}_k>0$. Moreover, we have the following estimate:
	\begin{align}\label{estimate-b}
		\into b_k(x,u_k)u_k\,\diff x
		\geq \hat{a}_k \into |u_k|^{q_k(x)}\,\diff x- \hat{b}_k
	\end{align}
	for all $u \in \Lp{q_k(\cdot)}$, where $\hat{a}_k,\hat{b}_k$ are some positive constants. From the growth condition in \eqref{growth-b} we know that the corresponding Nemytskij operators $B_k\colon \Lp{q_k(\cdot)}\to \Lp{q_k'(\cdot)}$, defined by $B_k(u_k)(x)=b_k(x,u_k(x))$, are well defined, bounded, and continuous for $k=1,2$. Hence, the operator $\mathcal{B}u=(B_1(u_1),B_2(u_2))$ is also well defined. By virtue of \eqref{compacness-embedding} and Remark \ref{comp-compcont}, we know that $\mathcal{B}\colon\mathcal{W}\hookrightarrow L^{q_1(\cdot),q_2(\cdot)}(\Omega)\to L^{q_1(\cdot),q_2(\cdot)}(\Omega)^*\hookrightarrow\mathcal{W}^*$ is bounded and completely continuous.
	
	Let $\lambda=(\lambda_1,\lambda_2)$ with $\lambda_k\geq 0$ and set
	\begin{align*}
		\lambda\mathcal{B}(u)=(\lambda_1 B_1(u_1),\lambda_2 B_2(u_2)).
	\end{align*}
	Furthermore, we set
	\begin{align*}
		\mathcal{F}(u)=\l(F_1(T_1 u_1,T_2 u_2,\nabla (T_1 u_1), \nabla (T_2 u_2)),F_2(T_1 u_1,T_2 u_2,\nabla (T_1 u_1),\nabla (T_2 u_2))\r),
	\end{align*}
	where $F_k$ denote the Nemytskij operators related to $f_k$, which are well defined for $k=1,2$ since the ranges of $T_1,T_2$ lie within the trapping region $[\underline{u},\overline{u}]$. Therefore, due to the growth condition in \ref{H2i} and the compact embedding $\mathcal{W}\hookrightarrow L^{p_1(\cdot),p_2(\cdot)}(\Omega)$ (see \eqref{compacness-embedding}), we have that
	\begin{align*}
		\mathcal{F}\colon \mathcal{W}\to L^{p_1(\cdot),p_2(\cdot)}(\Omega)^*\hookrightarrow \mathcal{W}^*
	\end{align*}
	is bounded and compact. For the boundary term, we define 
	\begin{align*}
		\mathcal{G}(u)=(G_1(T_1 u_1,T_2 u_2), G_2(T_1 u_1,T_2 u_2)),
	\end{align*}
	where $G_k$ are the Nemytskij operators generated by $g_k$. We know that
	\begin{align*}
		\mathcal{G}(u): \mathcal{W}\hookrightarrow L^{p_1(\cdot),p_2(\cdot)}(\partial\Omega)\to L^{p_1(\cdot),p_2(\cdot)}(\partial\Omega)^*\hookrightarrow\mathcal{W}^*
	\end{align*}
	is well defined, completely continuous, and bounded, due to \ref{H2i}, the compactness of the trace operator (see \eqref{compacness-embedding}), and Remark \ref{comp-compcont}.
	
	Finally, let $\mathcal{A}(u)=(A_1(u_1),A_2(u_2))$ where $A_k$ are defined in \eqref{operator_representation}. Because of Proposition \ref{properties_operator_double_phase}, it is clear that $\mathcal{A}\colon\mathcal{W}\to\mathcal{W}^*$ is bounded, continuous, strictly monotone, and of type $(\Ss_+)$. We have the representations
	\begin{align*}
		\langle \mathcal{A}(u),v\rangle_{\mathcal{W}}
		&=\sum_{k=1}^2 \into \big(|\nabla u_k|^{p_k(x)-2}\nabla u_k+\mu_k(x)|\nabla u_k|^{q_k(x)-2}\nabla u_k\big) \cdot \nabla v_k\,\diff x\\
		\langle \mathcal{B}(u),v\rangle_{\mathcal{W}}
		&=\sum_{k=1}^2 \into B_k(u_1,u_2,\nabla u_1, \nabla u_2)v_k\,\diff x\\
		\langle \mathcal{F}(u),v\rangle_{\mathcal{W}}
		&=\sum_{k=1}^2 \into F_k(T_1 u_1,T_2 u_2,\nabla (T_1 u_1), \nabla (T_2 u_2))v_k\,\diff x\\
		\langle \mathcal{G}(u),v\rangle_{\mathcal{W}}
		&=\sum_{k=1}^2 \int_{\partial\Omega} G_k(T_1 u_1,T_2 u_2)v_k\,\diff \sigma
	\end{align*}
	for all $u,v\in\mathcal{W}$.
	
	Using the notations above, $u\in\mathcal{W}\cap[\underline{u},\overline{u}]$ is a solution to \eqref{problem} if and only if
	\begin{align*}
		\langle \mathcal{A}(u),v\rangle_{\mathcal{W}}=
		\langle \mathcal{F}(u),v\rangle_{\mathcal{W}} 
		+\langle \mathcal{G}(u),v\rangle_{\mathcal{W}}
		\quad \text{for all }v\in \mathcal{W}.
	\end{align*}

	{\bf Step 2:} Auxiliary problem
	
	Let $\mathcal{T}(u)=(T_1u_1,T_2u_2)$, where $T_k$ are the truncation operators defined in \eqref{truncation-Tk}. Now we consider the following auxiliary problem given in the form
	\begin{align*}
		u\in\mathcal{W}\ : \
		\langle \mathcal{A}(u)+\lambda\mathcal{B}(u),v\rangle_{\mathcal{W}}=
		\langle \mathcal{F}(u),v\rangle_{\mathcal{W}} 
		+\langle \mathcal{G}(u),v\rangle_{\mathcal{W}}
		\quad \text{for all }v\in\mathcal{W}, 
	\end{align*}
	where $\lambda=(\lambda_1,\lambda_2)$ with $\lambda_k\geq 0$ to be specified later. Let $\Phi\colon\mathcal{W}\to\mathcal{W}^*$ be given by
	\begin{align*}
		\Phi(u):=\mathcal{A}(u)+\lambda\mathcal{B}(u)-\mathcal{F}(u)-\mathcal{G}(u).
	\end{align*}
	First, we know that $\Phi$ is bounded and continuous. Since $\mathcal{A}$ is of type $(\Ss_+)$ (see Proposition \ref{properties_operator_double_phase}) and $\mathcal{B},\mathcal{F},\mathcal{G}$ are compact (and hence completely continuous; see Remark \ref{comp-compcont}), we can apply Lemma \ref{lemma-splus} to get that $\Phi$ is of type $(\Ss_+)$ as well. Lemma \ref{lemma-pseudomonotone} then implies that $\Phi$ is pseudomonotone.

	Next, we are going to show that $\Phi\colon \mathcal{W}\to \mathcal{W}^*$ is coercive. To this end, using hypothesis \ref{H2i} and Young's inequality we estimate
	\begin{align}\label{estF}
		\begin{split}
			|\langle\mathcal{F}(u),u\rangle_{\mathcal{W}}| &\leq \sum_{k=1}^2 \into |f_k(x,T_1u_1,T_2u_2,\nabla(T_1u_1),\nabla(T_2u_2))||u_k|\,\diff x \\
			&\leq \into \l[ \ph_1|u_1| + c_1\l( |\nabla(T_1u_1)|^{p_1(x)-1}|u_1|+|\nabla(T_2u_2)|^{\frac{p_2(x)}{p_1'(x)}}|u_1| \r)\r]\,\diff x \\
			&\quad +\into \l[ \ph_2|u_2| + c_2\l( |\nabla(T_1u_1)|^{\frac{p_1(x)}{p_2'(x)}}|u_2|+|\nabla(T_2u_2)|^{p_2(x)-1}|u_2| \r)\r]\,\diff x \\
			&\leq \sum_{k=1}^2 \l( \into \ph_k^{p_k'(x)}\,\diff x + \into |u_k|^{p_k(x)}\,\diff x \r) \\
			&\quad +2 \sum_{k=1}^2 \l( \eps \into |\nabla(T_ku_k)|^{p_k(x)}\,\diff x + C_{\eps,k} \into |u_k|^{p_k(x)}\,\diff x \r) \\
			&\leq \sum_{k=1}^2 \l( 2\eps \rho_{\mathcal{H}_k}(|\nabla u_k|) +(2C_{\eps,k}+1) \rho_{\mathcal{H}_k}(u_k) + C_k \r),
		\end{split}
	\end{align}
	for suitable $C_k$ depending on $\ph_k$ and $C_{\eps,k}>0$ depending on both $\eps$ and $p_k$. 
	
	Next, we consider the operator $\mathcal{G}$. To this end, for any $u_k \in L^{\mathcal{H}_k}(\Omega)$, we define
	\begin{align*}
		\eta_k(u_k) =
		\begin{cases}
			q_k^+ &\text{if }\|u_k\|_{\mathcal{H}_k}<1, \\
			p_k^- &\text{if }\|u_k\|_{\mathcal{H}_k}\geq 1, \\
		\end{cases}
	\end{align*}
	and observe that $\eta_k>1$ for all $u_k\in L^{\mathcal{H}_k}(\Omega)$. Using the definition of $\eta_k$ along with \ref{H2ii}, H\"older's inequality, the embedding inequality $\|u\|_{p_k(\cdot),\partial\Omega} \leq S_k\|u\|_{1,\mathcal{H}_k}$ (cf. Proposition \ref{proposition_embeddings}(iii)), and Young's inequality gives
	\begin{align}\label{estG}
		\begin{split}
			|\langle\mathcal{G}(u),u\rangle_{\mathcal{W}}| &\leq \sum_{k=1}^2 \int_{\partial\Omega}|g_k(x,T_ku_k)||u_k|\,\diff \sigma\\ 
			&\leq \sum_{k=1}^2 \int_{\partial\Omega} \psi_k|u_k|\,\diff \sigma \\
			&\leq 2\sum_{k=1}^2 \|\psi_k\|_{p_k'(\cdot),\partial\Omega}\|u_k\|_{p_k(\cdot),\partial\Omega} \\
			&\leq 2\sum_{k=1}^2 S_k\|\psi_k\|_{p_k'(\cdot),\partial\Omega}\|u_k\|_{1,\mathcal{H}_k} \\
			&= \sum_{k=1}^2 C_k \l( \|u_k\|_{\mathcal{H}_k}+\|\nabla u_k\|_{\mathcal{H}_k} \r) \\
			&\leq \sum_{k=1}^2 \l[ \eps \l( \|u_k\|_{\mathcal{H}_k}^{\eta_k(u_k)}+\|\nabla u_k\|_{\mathcal{H}_k}^{\eta_k(|\nabla u_k|)} \r) + C_{\eps,k} \l( C_k^{\eta_k'(u_k)}+C_k^{\eta_k'(|\nabla u_k|)} \r) \r] \\
			&\leq \sum_{k=1}^2 \l[ \eps\l(\rho_{\mathcal{H}_k}(u_k) + \rho_{\mathcal{H}_k}(|\nabla u_k|)\r) + \hat{C}_{\eps,k} \r]
		\end{split}
	\end{align}
	for suitable positive constants $C_k$ depending on $\psi_k$'s, while $C_{\eps,k},\hat{C}_{\eps,k}$ also depend on $\eps$. On the other hand,  we have
	\begin{align*}
		\into |u_k|^{q_k(x)}\,\diff x \geq \tilde{a}_k\rho_{\mathcal{H}_k}(u_k)-\tilde{b}_k
	\end{align*}
	for suitable $\tilde{a}_k,\tilde{b}_k>0$. Using this along with \eqref{estimate-b}  we get
	\begin{align}\label{estAB}
		\begin{split}
			\langle \mathcal{A}(u)+\lambda\mathcal{B}(u),u\rangle_{\mathcal{W}} 
			&\geq  \sum_{k=1}^2 \l( \rho_{\mathcal{H}_k}(|\nabla u_k|) + \lambda \hat{a}_k\into |u_k|^{q_k(x)}\,\diff x-\lambda\hat{b}_k \r) \\
			&\geq \sum_{k=1}^2 \l( \rho_{\mathcal{H}_k}(|\nabla u_k|) + \tilde{a}_k\hat{a}_k\lambda \rho_{\mathcal{H}_k}(u_k)-\tilde{b}_k\hat{a}_k\lambda-\hat{b}_k\lambda \r).
		\end{split}
	\end{align}
	Combining \eqref{estF}, \eqref{estG}, and \eqref{estAB} together we obtain
	\begin{align}\label{coercive-1}
		\begin{split}
			\langle\Phi(u),u\rangle_{\mathcal{W}} &\geq \sum_{k=1}^2 \l[ (1-3\eps)\rho_{\mathcal{H}_k}(|\nabla u_k|) + (\tilde{a}_k\hat{a}_k\lambda-2C_{\eps,k}-1-\eps) \rho_{\mathcal{H}_k}(u_k)\r.\\
			&\qquad\quad\l.-\tilde{b}_k\hat{a}_k\lambda-\hat{b}_k\lambda - C_k - \hat{C}_{\eps,k} \r].
		\end{split}
	\end{align}
	Now, we choose 
	\begin{align*}
		\eps<\frac{1}{3}
		\quad\text{and}\quad \lambda>\frac{2C_{\eps,k}+1+\eps}{\min_k \tilde{a}_k\hat{a}_k}
	\end{align*}
	in \eqref{coercive-1} and use the fact that $\rho_{\mathcal{H}_k}(u_k)+\rho_{\mathcal{H}_k}(|\nabla u_k|) \to \infty$ if and only if $\|u_k\|_{1,\mathcal{H}_k} \to \infty$ (see Proposition \ref{proposition_modular_properties}(vi)). Hence, we infer that $\langle\Phi(u),u\rangle_{\mathcal{W}} \to +\infty$ as $\|u\|_{\mathcal{W}} \to \infty$.
	
	Since $\Phi$ is bounded, continuous, pseudomonotone, and coercive, the main theorem on pseudomonotone operators (see Theorem \ref{theorem_pseudomonotone}) implies the existence of $u\in\mathcal{W}$ such that $\Phi(u)=0$. 
	
	{\bf Step 3:} Comparison
	
	It remains to prove that $u\in[\underline{u},\overline{u}]$. We set $(u-\overline{u})_+ = ((u_1-\overline{u}_1)_+,(u_2-\overline{u}_2)_+)$. From $\Phi(u)=0$, besides recalling the definitions of $\mathcal{F},\mathcal{G}$, we deduce
	\begin{align}\label{comparison-solution1}
		\begin{split}
			0 &= \langle A_1(u_1)+\lambda_1 B_1(u_1),(u_1-\overline{u}_1)_+\rangle_{\mathcal{H}_1} \\
			&\quad - \langle F_1(\overline{u}_1,T_2u_2,\nabla \overline{u}_1,\nabla (T_2u_2))+G_1(\overline{u}_1,T_2u_2),(u_1-\overline{u}_1)_+\rangle_{\mathcal{H}_1}
		\end{split}
	\end{align}
	and
	\begin{align}\label{comparison-solution2}
		\begin{split}
		0 &= \langle A_2(u_2)+\lambda_2 B_2(u_2),(u_2-\overline{u}_2)_+\rangle_{\mathcal{H}_2} \\
		&\quad - \langle F_2(T_1u_1,\overline{u}_2,\nabla (T_1u_1),\nabla \overline{u}_2)+G_2(T_1u_1,\overline{u}_2),(u_2-\overline{u}_2)_+\rangle_{\mathcal{H}_2}.
		\end{split}
	\end{align}
	On the other hand, since $\overline{u}$ is a supersolution to \eqref{problem}, it turns out that
	\begin{align}\label{comparsion-supersolution}
		\begin{split}
			&\sum_{k=1}^2 \langle A_k(\overline{u}_k),(u_k-\overline{u}_k)_+\rangle_{\mathcal{H}_k}\\ 
			&\geq \langle F_1(\overline{u}_1,T_2u_2,\nabla \overline{u}_1,\nabla (T_2u_2))+G_1(\overline{u}_1,T_2u_2),(u_1-\overline{u}_1)_+\rangle_{\mathcal{H}_1} \\
			&\quad +\langle F_2(T_1u_1,\overline{u}_2,\nabla (T_1u_1),\nabla \overline{u}_2)+G_2(T_1u_1,\overline{u}_2),(u_2-\overline{u}_2)_+\rangle_{\mathcal{H}_2}.
		\end{split}
	\end{align}
	Hence, from \eqref{comparison-solution1}, \eqref{comparison-solution2} and \eqref{comparsion-supersolution} along with the monotonicity of $A_k$ (see Proposition \ref{properties_operator_double_phase}), we obtain
	\begin{align}\label{comparison-monotonicity}
		\sum_{k=1}^2 \lambda_k \langle B_k(u_k),(u_k-\overline{u}_k)_+\rangle_{\mathcal{H}_k} \leq \sum_{k=1}^2 \langle A_k(\overline{u}_k)-A_k(u_k),(u_k-\overline{u}_k)_+\rangle_{\mathcal{H}_k} \leq 0.
	\end{align}
	According to the definition of $B_k$ (see \eqref{def-b}), \eqref{comparison-monotonicity} implies
	\begin{align*}
		\into (u_k-\overline{u}_k)_+^{q_k(x)}\,\diff x \leq 0.
	\end{align*}
	Thus, $u_k \leq \overline{u}_k$ a.\,e.\,in $\Omega$. Similarly, we show $\underline{u}_k \leq u_k$ a.\,e.\,in $\Omega$ by applying the definition of subsolution. Therefore, we have shown that $u \in [\underline{u},\overline{u}]$ and so, by the definition of the truncations in \eqref{truncation-Tk} and the functions $b_k$ in \eqref{def-b}, we see that $u\in \mathcal{W}$ turns out to be a weak solution of the system \eqref{problem} lying within $ [\underline{u},\overline{u}]$.
\end{proof}

\section{Sub- and supersolutions}\label{Section4}

This section is devoted to the construction of pairs of sub- and supersolution for the system \eqref{problem}. Following ideas of Guarnotta-Marano \cite{Guarnotta-Marano-2021-a} (see also the papers of D'Agu\`\i-Sciammetta \cite{DAgui-Sciammetta-2012} and Motreanu-Sciammetta-Tornatore \cite{Motreanu-Sciammetta-Tornatore-2020} for a single equation), we prove the existence of infinitely many solutions to \eqref{problem} under suitable sign conditions on the nonlinearities, exhibiting an oscillatory behavior. We suppose the following assumptions on the Carath\'eodory functions$f_i\colon\Omega \times \R\times\R\times\R^N\times \R^N\to\R$ and $g_i\colon\partial\Omega\times\R\times  \R\to\R$ for $i=1,2$.

\begin{enumerate}[label=\textnormal{(H$3$)},ref=\textnormal{H$3$}]
	\item\label{H3}
	There exist $h_i,k_i\in\R$ such that $h_i\leq k_i$ and
	\begin{align*}
		f_1(x,k_1,s_2,0,\xi_2) \leq &\,0 \leq f_1(x,h_1,s_2,0,\xi_2) && \text{for a.\,a.\,}x\in\Omega,\\
		g_1(x,k_1,s_2) \leq &\,0 \leq g_1(x,h_1,s_2)&& \text{for a.\,a.\,}x\in\partial\Omega,\\
		f_2(x,s_1,k_2,\xi_1,0) \leq &\,0 \leq f_2(x,s_1,h_2,\xi_1,0)&& \text{for a.\,a.\,}x\in\Omega,\\
		g_2(x,s_1,k_2) \leq &\,0 \leq g_2(x,s_1,h_2)&& \text{for a.\,a.\,}x\in\partial\Omega,
	\end{align*}
	for all $(s_1,s_2)\in[h_1,k_1]\times[h_2,k_2]$ and for all $\xi_i\in\R^N$.
\end{enumerate}

We have the following existence result.

\begin{theorem}
	\label{Neumannsol}
	Let hypotheses \eqref{H1} and \eqref{H3} be satisfied. Suppose that \eqref{H2} is fulfilled for a.\,a.\,$x \in \Omega$, for all $s_i\in[h_i,k_i]$, and for all $\xi_i\in\R^N$, $i=1,2$. Then there exists a weak solution $(u_1,u_2)\in \mathcal{W}$ of system \eqref{problem} satisfying $h_i\leq u_i\leq k_i$ for $i=1,2$.
\end{theorem}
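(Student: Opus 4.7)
The plan is to reduce everything to Theorem \ref{theorem-sub-supersolution} by exhibiting the obvious candidate trapping region, namely the one generated by the constants appearing in \eqref{H3}. Specifically, I set
\begin{align*}
\underline{u}_i \equiv h_i, \qquad \overline{u}_i \equiv k_i, \qquad i=1,2,
\end{align*}
which clearly belong to $W^{1,\mathcal{H}_i}(\Omega)$ and satisfy $\underline{u}_i\le\overline{u}_i$ by \eqref{H3}. Since these functions are constant, $\nabla\underline{u}_i=\nabla\overline{u}_i=0$, and hence
\begin{align*}
\int_\Omega\bigl(|\nabla \overline u_i|^{p_i(x)-2}\nabla\overline u_i+\mu_i(x)|\nabla\overline u_i|^{q_i(x)-2}\nabla\overline u_i\bigr)\cdot\nabla v_i\,\diff x=0
\end{align*}
for every $v_i\in W^{1,\mathcal{H}_i}(\Omega)$, and similarly for $\underline u_i$. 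Therefore the inequalities \eqref{defsubsuper1}--\eqref{defsubsuper2} reduce entirely to sign conditions on the reactions $f_i$ and the boundary terms $g_i$.

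Next I verify the supersolution inequality \eqref{defsubsuper2}. Pick any $(w_1,w_2)\in\mathcal{W}$ with $h_i\le w_i\le k_i$ and any $v_1,v_2\ge 0$ in $\mathcal{W}$. By \eqref{H3}, for a.\,a.\,$x\in\Omega$,
\begin{align*}
f_1(x,k_1,w_2(x),0,\nabla w_2(x))\le 0,\qquad f_2(x,w_1(x),k_2,\nabla w_1(x),0)\le 0,
\end{align*}
and analogously $g_i(x,\cdot,\cdot)\le 0$ on $\partial\Omega$ when the $i$-th slot equals $k_i$. Multiplying by $v_i\ge 0$ and integrating produces
\begin{align*}
-\into f_i(\cdot)\,v_i\,\diff x-\int_{\partial\Omega} g_i(\cdot)\,v_i\,\diff\sigma\ge 0,
\end{align*}
so \eqref{defsubsuper2} holds. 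The subsolution inequality \eqref{defsubsuper1} is obtained in exactly the same way, using the reverse sign conditions at $h_i$ guaranteed by \eqref{H3}. All integrals involved are finite because the strengthened form of \eqref{H2} assumed in the statement provides the required growth bounds on the trapping region $[h_1,k_1]\times[h_2,k_2]$.

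Once the pair $\bigl((h_1,h_2),(k_1,k_2)\bigr)$ is shown to be a sub-supersolution pair in the sense of Definition \ref{def-sub-supersolution}, the hypotheses of Theorem \ref{theorem-sub-supersolution} are all met, and a weak solution $(u_1,u_2)\in\mathcal{W}$ of \eqref{problem} with $h_i\le u_i\le k_i$ is produced directly. The only genuinely new ingredient is the choice of constants; there is no real obstacle, since the sign hypothesis \eqref{H3} has been tailored precisely so that this choice works, and the independence of the conditions from $\xi_i$ accommodates the fact that $w_i$ may have nontrivial gradient while $\underline u_i,\overline u_i$ do not.
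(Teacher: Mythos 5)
Your proposal is correct and follows essentially the same route as the paper: take the constants $\underline{u}_i\equiv h_i$, $\overline{u}_i\equiv k_i$, note that the gradient terms vanish so the sub- and supersolution inequalities reduce to the sign conditions in \eqref{H3} tested against nonnegative $v_i$, and then invoke Theorem \ref{theorem-sub-supersolution}. No gaps to report.
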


\begin{proof}
	We set $\underline{u}_i:=h_i$ and $\overline{u}_i:=k_i$. By \eqref{H3} we have $\underline{u}_i\leq\overline{u}_i$. For all $(v_1,v_2)\in\mathcal{W}$ with $v_1,v_2\geq 0$ a.\,e.\,in $\Omega$ and for all $(w_1,w_2)\in \mathcal{W}$ such that $\underline{u}_i\leq w_i\leq \overline{u}_i$, we get
	\begin{align*}
		&\into \l(|\nabla \underline{u}_1|^{p_1(x)-2}\nabla \underline{u}_1+\mu_1(x)|\nabla \underline{u}_1|^{q_1(x)-2}\nabla \underline{u}_1\r)\cdot \nabla v_1\,\diff x
		-\into f_1(x,\underline{u}_1,w_2,\nabla \underline{u}_1,\nabla w_2)v_1\,\diff x\\
		&-\int_{\partial\Omega} g_1(x,\underline{u}_1,w_2)v_1\,\diff \sigma\\
		&+\into \l(|\nabla \underline{u}_2|^{p_2(x)-2}\nabla \underline{u}_2+\mu_2(x)|\nabla \underline{u}_2|^{q_2(x)-2}\nabla \underline{u}_2\r)\cdot \nabla v_2\,\diff x
		-\into f_2(x,w_1,\underline{u}_2,\nabla w_1,\nabla \underline{u}_2)v_2\,\diff x\\
		&-\int_{\partial\Omega} g_2(x,w_1,\underline{u}_2)v_2\,\diff \sigma \\
		=&-\into f_1(x,\underline{u}_1,w_2,\nabla \underline{u}_1,\nabla w_2)v_1\,\diff x-\int_{\partial\Omega} g_1(x,\underline{u}_1,w_2)v_1\,\diff \sigma\\
		&-\into f_2(x,w_1,\underline{u}_2,\nabla w_1,\nabla \underline{u}_2)v_2\,\diff x-\int_{\partial\Omega} g_2(x,w_1,\underline{u}_2)v_2\,\diff \sigma \leq 0.
	\end{align*}
	Analogous computations concerning $\overline{u}_1,\overline{u}_2$ prove that $(\underline{u}_1,\underline{u}_2)$ and $(\overline{u}_1,\overline{u}_2)$ form a pair of sub- and supersolution of problem \eqref{problem}. Then, Theorem \ref{theorem-sub-supersolution} implies the existence of a weak solution $(u_1,u_2) \in \mathcal{W}$ of \eqref{problem} satisfying $\underline{u}_i\leq u_i\leq \overline{u}_i$ for $i=1,2$.
\end{proof}

If we strengthen our assumptions, we can obtain more solutions. For this purpose, we assume the following hypothesis.

\begin{enumerate}[label=\textnormal{(H$4$)},ref=\textnormal{H$4$}]
	\item\label{H4}
	For all $n\in\N$, there exist $h_i^{(n)},k_i^{(n)}\in\R$ such that
	\begin{align*}
		\mbox{either} \quad h_i^{(n)}\leq k_i^{(n)}<h_i^{(n+1)} \quad \mbox{or} \quad k_i^{(n+1)}<h_i^{(n)}\leq k_i^{(n)}
	\end{align*}
	and
	\begin{align*}
		f_1\l(x,k_1^{(n)},s_2,0,\xi_2\r) \leq &\,0 \leq f_1\l(x,h_1^{(n)},s_2,0,\xi_2\r)&&\text{for a.\,a.\,}x\in\Omega,\\
		g_1\l(x,k_1^{(n)},s_2\r) \leq &\,0 \leq g_1\l(x,h_1^{(n)},s_2\r)&&\text{for a.\,a.\,}x\in\partial\Omega,\\
		f_2\l(x,s_1,k_2^{(n)},\xi_1,0\r) \leq &\,0 \leq f_2\l(x,s_1,h_2^{(n)},\xi_1,0\r)&&\text{for a.\,a.\,}x\in\Omega,\\
		g_2\l(x,s_1,k_2^{(n)}\r) \leq &\,0 \leq g_2\l(x,s_1,h_2^{(n)}\r)&&\text{for a.\,a.\,}x\in\partial\Omega,
	\end{align*}
	for all $(s_1,s_2)\in[h_1^{(n)},k_1^{(n)}]\times[h_2^{(n)},k_2^{(n)}]$, for all $\xi_i\in\R^N$, and for all $n\in\N$.
\end{enumerate}

\begin{theorem}
	\label{Neumannsols}
	Let hypotheses \eqref{H1} and \eqref{H4} be satisfied. Suppose that, for all $n\in\N$, \eqref{H2} is fulfilled  for a.\,a.\,$x \in \Omega$, for all $s_i\in[h_i^{(n)},k_i^{(n)}]$, and for all $\xi_i\in\R^N$. Then there exists a sequence $\{(u_1^{(n)},u_2^{(n)})\}\subseteq \mathcal{W}$ of pairwise distinct solutions to problem \eqref{problem}. Moreover, $u_i^{(n)}\leq u_i^{(n+1)}$ (resp., $u_i^{(n+1)}\leq u_i^{(n)}$) provided $k_i^{(n)}<h_i^{(n+1)}$ (resp., $k_i^{(n+1)}<h_i^{(n)}$) for all $n\in\N$.
\end{theorem}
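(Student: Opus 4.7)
The plan is to derive Theorem \ref{Neumannsols} from Theorem \ref{Neumannsol} by a direct iterative application, treating each $n$ as an independent instance of the previous result. The entire analytic machinery (truncation, pseudomonotone surjectivity, comparison via the cut-off $b_k$) has already been absorbed into Theorem \ref{Neumannsol}, so nothing new has to be proved at the PDE level; what remains is purely combinatorial/ordering bookkeeping.

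\textbf{Step 1: Producing the solutions.} For each fixed $n\in\N$ I would set $h_i:=h_i^{(n)}$ and $k_i:=k_i^{(n)}$, $i=1,2$. The $n$-th block of sign conditions in \eqref{H4} is precisely the hypothesis \eqref{H3} for this choice of constants, and the growth conditions \eqref{H2} hold on $[h_1^{(n)},k_1^{(n)}]\times[h_2^{(n)},k_2^{(n)}]$ by assumption. Theorem \ref{Neumannsol} therefore yields a weak solution $(u_1^{(n)},u_2^{(n)})\in\mathcal{W}$ of \eqref{problem} satisfying $h_i^{(n)}\leq u_i^{(n)}\leq k_i^{(n)}$ a.e.\ in $\Omega$, for $i=1,2$. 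Running this argument for every $n$ gives the desired sequence.

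\textbf{Step 2: Distinctness and monotonicity.} The alternative in \eqref{H4} guarantees that, for every $n$, the intervals $[h_i^{(n)},k_i^{(n)}]$ and $[h_i^{(n+1)},k_i^{(n+1)}]$ are disjoint: either $k_i^{(n)}<h_i^{(n+1)}$ (the first interval lies strictly below the second) or $k_i^{(n+1)}<h_i^{(n)}$ (it lies strictly above). Hence $u_i^{(n)}$ and $u_i^{(n+1)}$ take values in disjoint subsets of $\R$ and cannot coincide as functions. Iterating the strict inequalities along the chain forces the whole family of intervals $\{[h_i^{(n)},k_i^{(n)}]\}_{n\in\N}$ to be pairwise disjoint, so the solutions are pairwise distinct in $\mathcal{W}$. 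For the ordering claim, if $k_i^{(n)}<h_i^{(n+1)}$ then $u_i^{(n)}\leq k_i^{(n)}<h_i^{(n+1)}\leq u_i^{(n+1)}$ a.e.\ in $\Omega$, giving $u_i^{(n)}\leq u_i^{(n+1)}$; the symmetric case is identical.

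\textbf{Main obstacle.} Honestly, there is no serious obstacle: the hard work — producing one solution inside a prescribed trapping region under weak growth and sign hypotheses — was done once and for all in Theorem \ref{theorem-sub-supersolution} and repackaged in Theorem \ref{Neumannsol}. The only delicate point to state cleanly is that distinctness is read off from the disjointness of the enclosing intervals rather than from any uniqueness statement, since nothing in the sub-/supersolution framework prevents \eqref{problem} from having several solutions inside a given trapping region. This is what makes the oscillation hypothesis \eqref{H4} the right structural assumption: each successive pair $(h_i^{(n)},k_i^{(n)})$ defines an independent trapping region in which Theorem \ref{Neumannsol} applies, and the strict separation of consecutive intervals propagates to the solutions automatically.
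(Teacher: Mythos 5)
Your proposal is correct and follows essentially the same route as the paper: apply Theorem \ref{Neumannsol} for each $n$ with $h_i=h_i^{(n)}$, $k_i=k_i^{(n)}$, and read off the ordering and distinctness from $u_i^{(n)}\leq k_i^{(n)}<h_i^{(n+1)}\leq u_i^{(n+1)}$ (resp.\ the symmetric inequality). The only quibble is your side remark that iterating the inequalities makes \emph{all} intervals $[h_i^{(n)},k_i^{(n)}]$ pairwise disjoint --- this is not literally forced by \eqref{H4} when the alternative switches direction with $n$ (non-consecutive intervals could then overlap), but the paper's own one-line proof verifies no more than the consecutive separation either, so your argument is at the same level of rigor as the published one.
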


\begin{proof}
	It suffices to apply Theorem \ref{Neumannsol} for all $n\in\N$, with $h_i=h_i^{(n)}$ and $k_i=k_i^{(n)}$, and observe that $u_i^{(n)}(x)\leq k_i^{(n)}<h_i^{(n+1)}\leq u_i^{(n+1)}(x)$ for a.\,a.\,$x\in\Omega$, provided $k_i^{(n)}<h_i^{(n+1)}$ (the other case works similarly).
\end{proof}

The following example satisfies hypotheses \eqref{H3} and \eqref{H4}.

\begin{example}
	Let $c_i>0$ and $\rho_i \in L^\infty(\Omega)$ be such that $|\rho_i(x)| \leq \frac{1}{2}$ a.\,e.\,in $\Omega$. Then the functions
	\begin{align*}
		f_1(x,s_1,s_2,\xi_1,\xi_2) &= \sin s_1 + \frac{1}{2}\cos s_2 + c_1|\xi_1|^{p_1(x)-1} + \frac{1}{\pi}\arctan |\xi_2| \\
		f_2(x,s_1,s_2,\xi_1,\xi_2) &= \frac{1}{2}\sin s_1 + \cos s_2 + \frac{1}{\pi}\arctan |\xi_1| + c_2|\xi_2|^{p_2(x)-1} \\
		g_1(x,s_1,s_2) &= \sin s_1 + \frac{1}{2}\cos s_2 + \rho_1(x) \\
		g_2(x,s_1,s_2) &= \frac{1}{2}\sin s_1 + \cos s_2 + \rho_2(x)
	\end{align*}
	fulfill \eqref{H4} (and hence also \eqref{H3}). Indeed, one can choose $h_1^{(n)} = \frac{\pi}{2}+2\pi n$, $k_1^{(n)} = \frac{3}{2}\pi+2\pi n$, $h_2^{(n)} = 2\pi n$, and $k_2^{(n)} = \pi+2\pi n$ for all $n\in\N$. Since $h_i^{(n)} \to +\infty$ as $n\to\infty$, the sequence of solutions given by Theorem \ref{Neumannsols} diverges at $+\infty$ a.\,e.\,uniformly in $\Omega$.
\end{example}

\section{The Dirichlet problem}\label{Section5}

In this section we want to discuss the situation when we have a Dirichlet boundary condition instead of a nonhomogeneous Neumann one. We consider the system
\begin{equation}\label{problem-dirichlet}
	\left\{
	\begin{aligned}
		-\divergenz \big(|\nabla u_1|^{p_1(x)-2}\nabla u_1 +\mu_1(x)|\nabla u_1|^{q_1(x)-2}\nabla u_1 \big)&=  f_1(x,u_1,u_2,\nabla u_1,\nabla u_2)\quad&& \text{in } \Omega,\\
		-\divergenz \big(|\nabla u_2|^{p_2(x)-2}\nabla u_2 +\mu_2(x)|\nabla u_2|^{q_2(x)-2}\nabla u_2 \big)&=  f_2(x,u_1,u_2,\nabla u_1,\nabla u_2)\quad&& \text{in } \Omega,\\
		u_1=u_2&=0 &&\text{on } \partial\Omega,
	\end{aligned}
	\right.
\end{equation}
where $p_i,q_i,\mu_i$, $i=1,2$ satisfy hypotheses \eqref{H1}. Instead of $\mathcal{W}$, we consider its subspace $\mathcal{W}_0 = W^{1,\mathcal{H}_1}_0 \times W^{1,\mathcal{H}_2}_0$ equipped with the norm induced by the one of $\mathcal{W}$.

\begin{definition}
	We say that $(u_1,u_2)\in\mathcal{W}_0$ is a weak solution to \eqref{problem-dirichlet} if
	\begin{align}\label{defsol1-dirichlet}
		\begin{split}
			&\into \l(|\nabla u_1|^{p_1(x)-2}\nabla u_1+\mu_1(x)|\nabla u_1|^{q_1(x)-2}\nabla u_1\r)\cdot \nabla v_1\,\diff x
			=\into f_1(x,u_1,u_2,\nabla u_1,\nabla u_2)v_1\,\diff x\\
		\end{split}
	\end{align}
	and
	\begin{align}\label{defsol2-dirichlet}
		\begin{split}
			&\into \l(|\nabla u_2|^{p_2(x)-2}\nabla u_2+\mu_2(x)|\nabla u_2|^{q_2(x)-2}\nabla u_2\r)\cdot \nabla v_2\,\diff x
			=\into f_2(x,u_1,u_2,\nabla u_1,\nabla u_2)v_2\,\diff x\\
		\end{split}
	\end{align}
	hold true for all $(v_1,v_2)\in\mathcal{W}_0$ and all the integrals in \eqref{defsol1-dirichlet} and \eqref{defsol2-dirichlet} are finite.
\end{definition}

The definition of a sub- and a supersolution of problem \eqref{problem-dirichlet} reads as follows.

\begin{definition}
	We say that $(\underline{u}_1,\underline{u}_2)$, $(\overline{u}_1, \overline{u}_2)\in \mathcal{W}$ form a pair of sub- and supersolution of problem \eqref{problem-dirichlet} if $\underline{u}_i\leq 0\leq \overline{u}_i$ a.\,e.\,in $\Omega$ for $i=1,2$ and
	\begin{align*}
			&\into \l(|\nabla \underline{u}_1|^{p_1(x)-2}\nabla \underline{u}_1+\mu_1(x)|\nabla \underline{u}_1|^{q_1(x)-2}\nabla \underline{u}_1\r)\cdot \nabla v_1\,\diff x
			-\into f_1(x,\underline{u}_1,w_2,\nabla \underline{u}_1,\nabla w_2)v_1\,\diff x\\
			&+\into \l(|\nabla \underline{u}_2|^{p_2(x)-2}\nabla \underline{u}_2+\mu_2(x)|\nabla \underline{u}_2|^{q_2(x)-2}\nabla \underline{u}_2\r)\cdot \nabla v_2\,\diff x
			-\into f_2(x,w_1,\underline{u}_2,\nabla w_1,\nabla \underline{u}_2)v_2\,\diff x \leq 0
	\end{align*}
	and
	\begin{align*}
			&\into \l(|\nabla \overline{u}_1|^{p_1(x)-2}\nabla \overline{u}_1+\mu_1(x)|\nabla \overline{u}_1|^{q_1(x)-2}\nabla \overline{u}_1\r)\cdot \nabla v_1\,\diff x
			-\into f_1(x,\overline{u}_1,w_2,\nabla \overline{u}_1,\nabla w_2)v_1\,\diff x\\
			&+\into \l(|\nabla \overline{u}_2|^{p_2(x)-2}\nabla \overline{u}_2+\mu_2(x)|\nabla \overline{u}_2|^{q_2(x)-2}\nabla \overline{u}_2\r)\cdot \nabla v_2\,\diff x
			-\into f_2(x,w_1,\overline{u}_2,\nabla w_1,\nabla \overline{u}_2)v_2\,\diff x \geq 0
	\end{align*}
	for all $(v_1,v_2)\in\mathcal{W}_0$, $v_1,v_2\geq 0$ a.\,e.\,in $\Omega$ and for all $(w_1,w_2)\in \mathcal{W}$ such that $\underline{u}_i\leq w_i\leq \overline{u}_i$ for $i=1,2$, with all integrals above to be finite.
\end{definition}

Adapting the proof of Theorem \ref{theorem-sub-supersolution} with slight modifications, we have the following result.

\begin{theorem}\label{theorem-sub-supersolution-dirichlet}
	Let hypotheses \eqref{H1} and \ref{H2i} be satisfied. If $[\underline{u},\overline{u}]$ is a trapping region of \eqref{problem-dirichlet}, then the system in \eqref{problem-dirichlet} has a solution $u\in\mathcal{W}_0\cap [\underline{u},\overline{u}]$. 
\end{theorem}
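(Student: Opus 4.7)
The plan is to mirror the three-step proof of Theorem \ref{theorem-sub-supersolution}, working on the closed subspace $\mathcal{W}_0\subseteq\mathcal{W}$ and suppressing every contribution coming from the boundary data. In the first step I would keep the same truncations $T_k$ from \eqref{truncation-Tk} and the same penalizations $b_k$ from \eqref{def-b}; the associated Nemytskij operators $\mathcal{B}$ and $\mathcal{F}$ (where $\mathcal{F}$ is built from $\mathcal{T}$ and the growth condition \ref{H2i}) then restrict to bounded, completely continuous maps from $\mathcal{W}_0$ into $\mathcal{W}_0^*$, via the compact embeddings in \eqref{compacness-embedding} which remain valid on the zero-trace subspace, while the operator $\mathcal{A}$ retains all of the properties listed in Proposition \ref{properties_operator_double_phase}. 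Crucially, $\mathcal{G}$ and hypothesis \ref{H2ii} no longer enter the picture, which is why only \ref{H2i} is required.

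For the second step, I would define $\Phi\colon\mathcal{W}_0\to\mathcal{W}_0^*$ by $\Phi(u) := \mathcal{A}(u)+\lambda\mathcal{B}(u)-\mathcal{F}(u)$ with parameters $\lambda=(\lambda_1,\lambda_2)$ to be fixed later. Boundedness and continuity are immediate, the $(\Ss_+)$-property follows from Lemma \ref{lemma-splus} as before, and pseudomonotonicity from Lemma \ref{lemma-pseudomonotone}. Coercivity now requires only the estimates leading to \eqref{estF} and \eqref{estAB}, since the boundary contribution \eqref{estG} is absent; choosing $\varepsilon<1/3$ and $\lambda_k$ large enough as in \eqref{coercive-1} delivers $\langle\Phi(u),u\rangle_{\mathcal{W}_0}\to+\infty$ as $\|u\|_{\mathcal{W}_0}\to\infty$. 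Theorem \ref{theorem_pseudomonotone} then produces a solution $u\in\mathcal{W}_0$ of $\Phi(u)=0$.

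The third step---comparison---is where I expect the only genuine (but mild) obstacle: one must verify that the test functions $(u_k-\overline{u}_k)_+$ and $(\underline{u}_k-u_k)_+$ are admissible in $\mathcal{W}_0$, since $\overline{u}_k,\underline{u}_k$ need not themselves have vanishing trace. The hypothesis $\underline{u}_k\leq 0\leq\overline{u}_k$ a.\,e.\,in $\Omega$ is precisely what rescues this: standard trace theory for $W^{1,\mathcal{H}_k}(\Omega)$ then forces the trace of $\overline{u}_k$ to be nonnegative and the trace of $\underline{u}_k$ to be nonpositive on $\partial\Omega$, so that $(u_k-\overline{u}_k)_+$ and $(\underline{u}_k-u_k)_+$ have zero trace (as $u_k$ itself does) and therefore belong to $W^{1,\mathcal{H}_k}_0(\Omega)$. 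Once this admissibility is secured, I would test $\Phi(u)=0$ against $(u_k-\overline{u}_k)_+$, compare with the supersolution inequality, use the monotonicity of $A_k$ from Proposition \ref{properties_operator_double_phase}, and exploit the definition \eqref{def-b} of $b_k$ exactly as in \eqref{comparison-solution1}--\eqref{comparison-monotonicity}. This yields $u_k\leq\overline{u}_k$ a.\,e.\,in $\Omega$; the symmetric argument with $(\underline{u}_k-u_k)_+$ gives $\underline{u}_k\leq u_k$, so that $u\in[\underline{u},\overline{u}]\cap\mathcal{W}_0$, the truncations and the penalizations collapse, and $u$ becomes the desired weak solution of \eqref{problem-dirichlet}.
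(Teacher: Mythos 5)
Your proposal is correct and follows exactly the route the paper intends: the paper itself only remarks that Theorem \ref{theorem-sub-supersolution} adapts ``with slight modifications'', and your outline --- dropping $\mathcal{G}$ and \ref{H2ii}, running the same truncation--penalization--coercivity scheme on $\mathcal{W}_0$ with the surjectivity result of Theorem \ref{theorem_pseudomonotone}, and using $\underline{u}_i\leq 0\leq\overline{u}_i$ to make $(u_k-\overline{u}_k)_+$ and $(\underline{u}_k-u_k)_+$ admissible test functions in the zero-trace space --- is precisely that adaptation. The only point deserving a precise reference is the step where ``zero trace'' is converted into membership in $W^{1,\mathcal{H}_k}_0(\Omega)$ (equivalently, a lemma stating that $v\in W^{1,\mathcal{H}_k}(\Omega)$ with $0\leq v\leq w$ and $w\in W^{1,\mathcal{H}_k}_0(\Omega)$ implies $v\in W^{1,\mathcal{H}_k}_0(\Omega)$), which you invoke as ``standard trace theory'' but which in the variable exponent double phase setting should be backed by the characterization of $W^{1,\mathcal{H}_k}_0(\Omega)$ available under hypotheses \eqref{H1}.
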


In order to construct a pair of sub- and supersolution, we suppose the following assumptions.

\begin{enumerate}[label=\textnormal{(H$5$)},ref=\textnormal{H$5$}]
	\item\label{H5}
	There exist $\hat{\ph}_i,\hat{\psi}_i\in L^{p_i'(\cdot)}(\Omega)$ such that $0 \leq \hat{\ph}_i \leq \hat{\psi}_i$, $\hat{\ph}_i \not\equiv 0$, and
	\begin{align*}
	\hat{\ph}_i(x) \leq f_i(x,s_1,s_2,\xi_1,\xi_2) \leq \hat{\psi}_i(x)
	\end{align*}
	for a.\,a.\,$x \in \Omega$ and for all $(s_1,s_2,\xi_1,\xi_2)\in[0,+\infty)\times[0,+\infty)\times\R^N\times\R^N$.
\end{enumerate}

\begin{theorem}\label{Dirichletsol}
	Under the hypotheses \eqref{H1} and \eqref{H5}, there exists $(u_1,u_2)\in \mathcal{W}_0$ solution to problem \eqref{problem-dirichlet}.
\end{theorem}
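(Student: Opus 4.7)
The plan is to apply Theorem \ref{theorem-sub-supersolution-dirichlet}, so I need to produce a pair of sub- and supersolutions $(\underline{u},\overline{u})$ for \eqref{problem-dirichlet} and verify that assumption \ref{H2i} holds on the resulting trapping region. The growth condition is in fact free: by \eqref{H5}, $|f_i(x,s_1,s_2,\xi_1,\xi_2)|\leq\hat{\psi}_i(x)\in L^{p_i'(\cdot)}(\Omega)$ on $[0,+\infty)\times[0,+\infty)\times\R^N\times\R^N$, so once the trapping region is contained in the nonnegative cone, \ref{H2i} is satisfied with $\varphi_i=\hat{\psi}_i$ and any $c_i>0$.

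For the subsolution I would take $\underline{u}_i\equiv 0$. Then $\nabla\underline{u}_i=0$, so the double-phase terms vanish identically, and for every admissible $w_j$ with $0\le w_j\le\overline{u}_j$ and $v_i\ge 0$, the lower bound in \eqref{H5} yields $f_i(x,\underline{u}_i,w_{3-i},0,\nabla w_{3-i})\ge\hat{\varphi}_i\ge 0$ (and the symmetric condition for $i=2$), so $-\int_\Omega f_i v_i\,\diff x\le 0$, giving the subsolution inequality.

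For the supersolution, for each $i\in\{1,2\}$ I would define $\overline{u}_i\in W^{1,\mathcal{H}_i}_0(\Omega)$ as a weak solution of the auxiliary Dirichlet problem $A_i(\overline{u}_i)=\hat{\psi}_i$ in $W^{1,\mathcal{H}_i}_0(\Omega)^*$. Existence follows from Theorem \ref{theorem_pseudomonotone}: the restriction of $A_i$ to $W^{1,\mathcal{H}_i}_0(\Omega)$ is bounded, continuous and of type $(\Ss_+)$ by Proposition \ref{properties_operator_double_phase}, hence pseudomonotone by Lemma \ref{lemma-pseudomonotone}; coercivity comes from $\langle A_i u,u\rangle_{\mathcal{H}_i}=\rho_{\mathcal{H}_i}(|\nabla u|)$ combined with Poincar\'e's inequality on $W^{1,\mathcal{H}_i}_0(\Omega)$ and Proposition \ref{proposition_modular_properties}(iv),(vi). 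Nonnegativity of $\overline{u}_i$ is obtained by testing with $-(\overline{u}_i)_-\in W^{1,\mathcal{H}_i}_0(\Omega)$: the standard computation gives
\begin{align*}
\rho_{\mathcal{H}_i}(|\nabla(\overline{u}_i)_-|)=-\into\hat{\psi}_i\,(\overline{u}_i)_-\,\diff x\le 0,
\end{align*}
so $(\overline{u}_i)_-=0$, i.e.\ $\overline{u}_i\ge 0$. Then for $v_i\ge 0$ and $0\le w_j\le\overline{u}_j$, the upper bound in \eqref{H5} gives $f_i\le\hat{\psi}_i$ pointwise, and
\begin{align*}
\into A_i(\overline{u}_i)\cdot\nabla v_i\,\diff x-\into f_i(\cdot,\overline{u},w)v_i\,\diff x\ge\into(\hat{\psi}_i-f_i)v_i\,\diff x\ge 0,
\end{align*}
which is exactly the supersolution inequality for \eqref{problem-dirichlet}.

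Since $\underline{u}_i=0\le\overline{u}_i$, $(\underline{u},\overline{u})$ is a pair of sub- and supersolutions, the trapping region lies in the nonnegative cone where \ref{H2i} is granted by \eqref{H5}, and Theorem \ref{theorem-sub-supersolution-dirichlet} delivers a solution $(u_1,u_2)\in\mathcal{W}_0\cap[\underline{u},\overline{u}]$. There is no real obstacle here: the only nontrivial technical step is the construction and sign analysis of $\overline{u}_i$, which is a routine application of pseudomonotonicity plus a truncation test; the role of the hypothesis $\hat{\varphi}_i\not\equiv 0$ is merely to ensure that the supersolution is nontrivial and hence that the trapping region has nonempty interior, although it is not strictly required for the existence statement.
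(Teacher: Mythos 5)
Your argument is correct and follows the same overall strategy as the paper: produce the supersolution by solving the auxiliary Dirichlet problem with right-hand side $\hat{\psi}_i$, establish its sign and the ordering by testing with truncations, observe that \eqref{H5} supplies the growth condition \ref{H2i} on the resulting trapping region (with $\ph_i=\hat{\psi}_i$ and the gradient terms dropped), and conclude via Theorem \ref{theorem-sub-supersolution-dirichlet}. The one genuine difference is the subsolution: the paper does \emph{not} take $\underline{u}\equiv 0$ but solves a second auxiliary problem with right-hand side $\hat{\ph}_i$, then shows $0\le\underline{u}_i\le\overline{u}_i$ by testing with $\underline{u}_i^-$ and $(\underline{u}_i-\overline{u}_i)_+$. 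This is where the hypothesis $\hat{\ph}_i\not\equiv 0$ is actually used: it forces $\underline{u}_i\not\equiv 0$ and hence a nontrivial solution, so your closing remark misattributes its role (it concerns the subsolution, not the supersolution), though you are right that it is not needed for the bare existence statement your construction delivers. A further cosmetic difference is that the paper invokes the Minty--Browder theorem for the auxiliary problems where you invoke the pseudomonotone surjectivity theorem; both routes require the same coercivity input (the Poincar\'e-type equivalence of $\|\nabla\cdot\|_{\mathcal{H}_i}$ with the full norm on $W^{1,\mathcal{H}_i}_0(\Omega)$, available from the cited reference), so nothing is lost either way.
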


\begin{proof}
	Let us consider the auxiliary problems
	\begin{equation}\label{auxiliary-sub-dirichlet}
		\left\{
		\begin{aligned}
			-\divergenz \big(|\nabla u_1|^{p_1(x)-2}\nabla u_1 +\mu_1(x)|\nabla u_1|^{q_1(x)-2}\nabla u_1 \big)&=  \hat{\ph}_1(x)\quad&& \text{in } \Omega,\\
			-\divergenz \big(|\nabla u_2|^{p_2(x)-2}\nabla u_2 +\mu_2(x)|\nabla u_2|^{q_2(x)-2}\nabla u_2 \big)&=  \hat{\ph}_2(x)\quad&& \text{in } \Omega,\\
			u_1=u_2&=0 &&\text{on } \partial\Omega,
		\end{aligned}
		\right.
	\end{equation}
	and
	\begin{equation}\label{auxiliary-super-dirichlet}
		\left\{
		\begin{aligned}
			-\divergenz \big(|\nabla u_1|^{p_1(x)-2}\nabla u_1 +\mu_1(x)|\nabla u_1|^{q_1(x)-2}\nabla u_1 \big)&=  \hat{\psi}_1(x)\quad&& \text{in } \Omega,\\
			-\divergenz \big(|\nabla u_2|^{p_2(x)-2}\nabla u_2 +\mu_2(x)|\nabla u_2|^{q_2(x)-2}\nabla u_2 \big)&=  \hat{\psi}_2(x)\quad&& \text{in } \Omega,\\
			u_1=u_2&=0 &&\text{on } \partial\Omega.
		\end{aligned}
		\right.
	\end{equation}
	According to the Minty-Browder theorem (see, e.\,g., Corollary 6.1.34 in Papageorgiou-Winkert \cite{Papageorgiou-Winkert-2018}) and the embedding $W^{1,\mathcal{H}_i}_0(\Omega) \hookrightarrow L^{p_i(\cdot)}(\Omega)$ (see Proposition \ref{proposition_embeddings}(ii)), there exist solutions $\underline{u} = (\underline{u}_1,\underline{u}_2)\in\mathcal{W}_0$ and $\overline{u} = (\overline{u}_1,\overline{u}_2)\in\mathcal{W}_0$ of \eqref{auxiliary-sub-dirichlet} and \eqref{auxiliary-super-dirichlet}, respectively. Testing \eqref{auxiliary-sub-dirichlet} and \eqref{auxiliary-super-dirichlet} with $\underline{u}^-$ and recalling $\hat{\ph}_i \geq 0$, we see that $\underline{u}_i \geq 0$ a.\,e.\,in $\Omega$. In addition, $\hat{\ph}_i \not\equiv 0$ forces $\underline{u}_i \not\equiv 0$. Testing \eqref{auxiliary-sub-dirichlet} and \eqref{auxiliary-super-dirichlet} with $(\underline{u}-\overline{u})_+$, besides using $\hat{\ph}_i \leq \hat{\psi}_i$ and the strict monotonicity of the operators, yields $\underline{u}_i \leq \overline{u}_i$ a.\,e.\,in $\Omega$. Moreover, due to \eqref{H5}, $[\underline{u},\overline{u}]$ is a trapping region of \eqref{problem-dirichlet}. The conclusion thus follows by applying Theorem \ref{theorem-sub-supersolution-dirichlet}.
\end{proof}

\section*{Acknowledgment}

The first two authors are members of the Gruppo Nazionale  per l'Analisi Matematica, la Probabilit\`{a} e le loro Applicazioni  (GNAMPA) of the Istituto Nazionale di Alta Matematica (INdAM); they are supported by the research project PRIN 2017 `Nonlinear Differential Problems via Variational, Topological and Set-valued Methods' (Grant No. 2017AYM8XW) of MIUR. \\
The first author was also supported by the GNAMPA-INdAM Project CUP\_E55F22000270001 and the research project `MO.S.A.I.C.' PRA 2020--2022 `PIACERI' Linea 3 of the University of Catania. \\
The second author was also supported by the grant FFR 2021 `Roberto Livrea'. \\
The third author was financially supported by GNAMPA and thanks the University of Palermo for the kind hospitality during a research stay in May/June 2022.



\begin{thebibliography}{99}

\bibitem{Averna-Motreanu-Tornatore-2016}
	D. Averna, D. Motreanu, E. Tornatore,
	{\it Existence and asymptotic properties for quasilinear elliptic equations with gradient dependence},
	Appl. Math. Lett. {\bf 61} (2016), 102--107.

\bibitem{Carl-Le-2021}
	S. Carl, V.K. Le,
	``Multi-valued Variational Inequalities and Inclusions'',
	Springer, Cham, 2021.

\bibitem{Carl-Le-Motreanu-2007}
	S. Carl, V.K. Le, D. Motreanu,
	``Nonsmooth Variational Problems and Their Inequalities'',
	Springer, New York, 2007.

\bibitem{Carl-Le-Winkert-2022}
	S. Carl, V.K. Le, P. Winkert,
	{\it Multi-valued variational inequalities for variable exponent double phase problems: comparison and extremality results},
	preprint, https://arxiv.org/abs/2201.02801.

\bibitem{Carl-Motreanu-2017}
	S. Carl, D. Motreanu,
	{\it Extremal solutions for nonvariational quasilinear elliptic systems via expanding trapping regions},
	Monatsh. Math. {\bf 182} (2017), no. 4, 801--821.

\bibitem{Carl-Winkert-2009}
	S. Carl, P. Winkert,
	{\it General comparison principle for variational-hemivariational inequalities},
	J. Inequal. Appl. 2009, Art. ID 184348, 29 pp.

\bibitem{Chabrowski-2011}
	J. Chabrowski,
	{\it On the {N}eumann problem for systems of elliptic equations involving homogeneous nonlinearities of a critical degree},
	Colloq. Math. {\bf 125} (2011), no. 1, 115--127.

\bibitem{Crespo-Blanco-Gasinski-Harjulehto-Winkert-2022}
	\'{A}. Crespo-Blanco, L. Gasi\'nski, P. Harjulehto, P. Winkert,
	{\it A new class of double phase variable exponent problems: Existence and uniqueness},
	J. Differential Equations {\bf 323} (2022), 182--228.
	
\bibitem{DAgui-Sciammetta-2012}
	G. D'Agu\`\i, A. Sciammetta,
	{\it Infinitely many solutions to elliptic problems with variable exponent and nonhomogeneous Neumann conditions},
	Nonlinear Anal. {\bf 75} (2012), no. 14, 5612--5619.

\bibitem{de-Araujo-Faria-2019}
	A.L.A. de Araujo, L.F.O. Faria,
	{\it Positive solutions of quasilinear elliptic equations with exponential nonlinearity combined with convection term},
	J. Differential Equations {\bf 267} (2019), no. 8, 4589--4608.

\bibitem{deGodoi-Miyagaki-Rodrigues-2016}
	J.D.B. de Godoi, O.H. Miyagaki, R.S. Rodrigues,
	{\it A class of nonlinear elliptic systems with {S}teklov-{N}eumann nonlinear boundary conditions},
	Rocky Mountain J. Math. {\bf 46} (2016), no. 5, 1519--1545.

\bibitem{Diening-Harjulehto-Hasto-Ruzicka-2011}
	L. Diening, P. Harjulehto, P. H\"{a}st\"{o}, M. R$\mathring{\text{u}}$\v{z}i\v{c}ka,
	``Lebesgue and Sobolev Spaces with Variable Exponents'',
	Springer, Heidelberg, 2011.

\bibitem{Dupaigne-Ghergu-Radulescu-2007}
	L. Dupaigne, M. Ghergu, V. D. R\u{a}dulescu,
	{\it Lane-{E}mden-{F}owler equations with convection and singular potential},
	J. Math. Pures Appl. (9) {\bf 89} (2007), no. 6, 563--581.

\bibitem{El-Manouni-Marino-Winkert-2022}
	S. El Manouni, G. Marino, P. Winkert,
	{\it Existence results for double phase problems depending on Robin and Steklov eigenvalues for the $p$-Laplacian},
	Adv. Nonlinear Anal. {\bf 11} (2022), no. 1, 304--320.

\bibitem{Fan-Zhao-2001}
	X. Fan, D. Zhao,
	{\it On the spaces {$L^{p(x)}(\Omega)$} and {$W^{m,p(x)}(\Omega)$}},
	J. Math. Anal. Appl. {\bf 263} (2001), no. 2, 424--446.

\bibitem{Faraci-Motreanu-Puglisi-2015}
	F. Faraci, D. Motreanu, D. Puglisi,
	{\it Positive solutions of quasi-linear elliptic equations with dependence on the gradient},
	Calc. Var. Partial Differential Equations {\bf 54} (2015), no. 1,  525--538.

\bibitem{Faraci-Puglisi-2016}
	F. Faraci, D. Puglisi,
	{\it A singular semilinear problem with dependence on the gradient},
	J. Differential Equations {\bf 260} (2016), no. 4, 3327--3349.

\bibitem{Faria-Miyagaki-Pereira-2014}
	L.F.O. Faria, O.H. Miyagaki, F.R. Pereira,
	{\it Quasilinear elliptic system in exterior domains with dependence on the gradient},
	Math. Nachr. {\bf 287} (2014), no. 4, 361--373.

\bibitem{Figueiredo-Madeira-2021}
	G.M. Figueiredo, G.F. Madeira,
	{\it Positive maximal and minimal solutions for non-homogeneous elliptic equations depending on the gradient},
	J. Differential Equations {\bf 274} (2021), 857--875.

\bibitem{Francu-1990}
	J. Franc$\mathring{\text{u}}$,
	{\it Monotone operators. {A} survey directed to applications to differential equations},
	Apl. Mat. {\bf 35} (1990), no. 4, 257--301.

\bibitem{Gambera-Guarnotta-2022}
	L. Gambera, U. Guarnotta,
	{\it Strongly singular convective elliptic equations in $\R^N$ driven by a non-homogeneous operator},
	Comm. Pure Appl. Math. {\bf 21} (2022), 3031--3054.

\bibitem{Gasinski-Papageorgiou-2017}
	L. Gasi\'nski, N.S. Papageorgiou,
	{\it Positive solutions for nonlinear elliptic problems with dependence on the gradient},
	J. Differential Equations {\bf  263} (2017), 1451--1476.

\bibitem{Gasinski-Winkert-2020}
	L. Gasi\'nski, P. Winkert,
	{\it Existence and uniqueness results for double phase problems with convection term},
	J. Differential Equations {\bf 268} (2020), no. 8, 4183--4193.

\bibitem{Guarnotta-Marano-2021-a}
	U. Guarnotta, S.A. Marano,
	{\it Infinitely many solutions to singular convective {N}eumann systems with arbitrarily growing reactions},
	J. Differential Equations {\bf 271} (2021), 849--863.
	
\bibitem{Guarnotta-Marano-2021-b}
	U. Guarnotta, S.A. Marano,
	{\it Corrigendum to ``{I}nfinitely many solutions to singular convective {N}eumann systems with arbitrarily growing reactions'' [J. Differential Equations 271 (2021) 849--863]},
	J. Differential Equations {\bf 274} (2021), 1209--1213.

\bibitem{Guarnotta-Marano-Motreanu-2020}
	U. Guarnotta, S.A. Marano, D. Motreanu
	{\it On a singular {R}obin problem with convection terms},
	Adv. Nonlinear Stud. {\bf 20} (2020), no. 4, 895--909.

\bibitem{Guarnotta-Marano-Moussaoui-2022}
	U. Guarnotta, S.A. Marano, A. Moussaoui,
	{\it Singular quasilinear convective elliptic systems in $\mathbb{R}^{N}$},
	Adv. Nonlinear Anal. {\bf 11} (2022), no. 1, 741--756.

\bibitem{Harjulehto-Hasto-2019}
	P. Harjulehto, P. H\"{a}st\"{o},
	``Orlicz Spaces and Generalized {O}rlicz Spaces'',
	Springer, Cham, 2019.

\bibitem{Kovacik-Rakosnik-1991}
	O. Kov{\'a}{\v{c}}ik, J. R{\'a}kosn{\'{\i}}k,
	{\it On spaces {$L^{p(x)}$} and {$W^{k,p(x)}$}},
	Czechoslovak Math. J. {\bf 41(116)} (1991), no. 4, 592--618.

\bibitem{Liu-Motreanu-Zeng-2019}
	Z. Liu, D. Motreanu, S. Zeng,
	{\it Positive solutions for nonlinear singular elliptic equations of {$p$}-{L}aplacian type with dependence on the gradient},
	Calc. Var. Partial Differential Equations {\bf 58} (2019), no. 1,  Paper No. 28, 22 pp.

\bibitem{Marano-Winkert-2019}
	S.A. Marano, P. Winkert,
	{\it On a quasilinear elliptic problem with convection term and nonlinear boundary condition},
	Nonlinear Anal. {\bf 187} (2019), 159--169.

\bibitem{Motreanu-Sciammetta-Tornatore-2020}
	D. Motreanu, A. Sciammetta, E. Tornatore,
	{\it A sub-supersolution approach for Neumann boundary value problems with gradient dependence},
	Nonlinear Anal. Real World Appl. {\bf 54} (2020), 103096, 12 pp.

\bibitem{Motreanu-Tornatore-2017}
	D. Motreanu, E. Tornatore,
	{\it Location of solutions for quasi-linear elliptic equations with general gradient dependence},
	Electron. J. Qual. Theory Differ. Equ. {\bf 2017}, no. 10, 1-10.

\bibitem{Motreanu-Winkert-2019}
	D. Motreanu, P. Winkert,
	{\it Existence and asymptotic properties for quasilinear elliptic equations with gradient dependence},
	Appl. Math. Lett. {\bf 95} (2019), 78--84.

\bibitem{Papageorgiou-Radulescu-Repovs-2020}
	N.S. Papageorgiou, V.D. R\u{a}dulescu, D.D. Repov\v{s},
	{\it Positive solutions for nonlinear {N}eumann problems with singular terms and convection},
	J. Math. Pures Appl. (9) {\bf 136} (2020), 1--21.

\bibitem{Papageorgiou-Winkert-2018}
	N.S. Papageorgiou, P. Winkert,
	``Applied Nonlinear Functional Analysis. An Introduction'',
	De Gruyter, Berlin, 2018.

\bibitem{Vetro-Winkert-2022}
	F. Vetro, P. Winkert,
	{\it Existence, uniqueness and asymptotic behavior of parametric anisotropic (p,q)-equations with convection},
	Appl. Math. Optim. {\bf 86} (2022), no. 2, Paper No. 18, 18 pp.

\bibitem{Zeidler-1990}
	E. Zeidler,
	``Nonlinear Functional Analysis and Its Applications. {II}/{B}'',
	Springer-Verlag, New York, 1990.

\bibitem{Zhikov-1986}
	V.V. Zhikov,
	{\it Averaging of functionals of the calculus of variations and elasticity theory},
	Izv. Akad. Nauk SSSR Ser. Mat. {\bf 50} (1986), no. 4, 675--710.

\bibitem{Zhikov-1995}
	V.V. Zhikov,
	{\it On Lavrentiev's phenomenon},
	Russian J. Math. Phys. {\bf 3} (1995), no. 2, 249--269.

\bibitem{Zhikov-2011}
	V.V. Zhikov,
	{\it On variational problems and nonlinear elliptic equations with nonstandard growth conditions},
	J. Math. Sci. {\bf 173} (2011), no. 5, 463--570.

\end{thebibliography}
\end{document}